\definecolor{linkred}{rgb}{0.48,0.1,0.05}
\definecolor{linkblue}{RGB}{16, 78, 139}
	\titlespacing{\section}{0pt}{12pt}{0pt}
	\titlespacing{\subsection}{0pt}{6pt}{0pt}
\long\def\@footnotetext#1{%
\H@@footnotetext{%
\ifHy@nesting 
\hyper@@anchor{\@currentHref}{#1}%
\else 
\Hy@raisedlink{\hyper@@anchor{\@currentHref}{\relax}}#1%
\fi 
}}
\def\@footnotemark{%
\leavevmode 
\ifhmode\edef\@x@sf{\the\spacefactor}\nobreak\fi 
\H@refstepcounter{Hfootnote}%
\hyper@makecurrent{Hfootnote}%
\hyper@linkstart{link}{\@currentHref}%
\@makefnmark 
\hyper@linkend 
\ifhmode\spacefactor\@x@sf\fi 
\relax 
}%
\renewcommand*\@footnotemark{%
\leavevmode 
\ifhmode 
\edef\@x@sf{\the\spacefactor}%
\FN@mf@check 
\nobreak 
\fi 
\H@refstepcounter{Hfootnote}%
\hyper@makecurrent{Hfootnote}%
\hyper@linkstart{link}{\@currentHref}%
\@makefnmark 
\hyper@linkend 
\ifFN@pp@towrite 
\FN@pp@writetemp 
\FN@pp@towritefalse 
\fi 
\FN@mf@prepare 
\ifhmode\spacefactor\@x@sf\fi 
\relax%
}%
\theoremstyle{plain}
\newtheorem{theorem}{Theorem}[section]
\newtheorem{lemma}[theorem]{Lemma}
\newtheorem{corollary}[theorem]{Corollary}
\newtheorem{conjecture}[theorem]{Conjecture}
\theoremstyle{definition}
\newtheorem{notation}[theorem]{Notation}
\newtheorem{remark}[theorem]{Remark}
\newcommand{\sys}{{\rm sys}}
\newcommand{\ii}{{\mathit i}}
\newcommand{\Z}{{\mathbb Z}}
\newcommand{\M}{{\mathcal M}}
\newcommand{\PSL}{{\rm PSL}}
\newcommand{\arcsinh}{{\,\rm arcsinh}}
\newcommand{\numS}{{\rm Kiss}(S)}
\newcommand{\curves}{\mathcal H}
\long\def\symbolfootnote[#1]#2{\begingroup%
\def\thefootnote{\fnsymbol{footnote}}\footnote[#1]{#2}\endgroup}
\def\blfootnote{\xdef\@thefnmark{}\@footnotetext}
\begin{document}

{\Large \bfseries \sc Kissing numbers for surfaces}

{\bfseries Hugo Parlier\symbolfootnote[2]{\normalsize Research supported by Swiss National Science Foundation grant number PP00P2\textunderscore 128557\\ 
{\em Address:} Department of Mathematics, University of Fribourg, Switzerland \\
{\em Email:} \href{mailto:hugo.parlier@unifr.ch}{hugo.parlier@unifr.ch}\\
{\em 2000 Mathematics Subject Classification:} Primary: 30F10. Secondary: 32G15, 53C22. \\
{\em Key words and phrases:} hyperbolic surfaces, metrics on surfaces, kissing numbers, systoles
}}

{\em Abstract.} The so-called {\it kissing number} for hyperbolic surfaces is the maximum number of homotopically distinct systoles a surface of given genus $g$ can have. These numbers, first studied (and named) by Schmutz Schaller by analogy with lattice sphere packings, are known to grow, as a function of genus, at least like $g^{\sfrac{4}{3}-\varepsilon}$ for any $\varepsilon >0$. The first goal of this article is to give upper bounds on these numbers; in particular the growth is shown to be sub-quadratic. In the second part, a construction of (non hyperbolic) surfaces with roughly $g^{\sfrac{3}{2}}$ systoles is given.

\vspace{1cm}

\section{Introduction}
The classical kissing number problem for sphere packings is the search for an optimal upper bound on the number of $n$-dimensional (euclidean) unit spheres, pairwise disjoint in their interior, that can be tangent to a fixed unit sphere. Exact values for these numbers, commonly called kissing numbers for simplicity, are only known in a finite number of cases ($n=1-4, 8, 24$, see \cite{pfzi04} and references therein). A seemingly easier problem is to ask the same question as above, but with the restriction that the centers of the spheres lie on some euclidean lattice. This gives rise to the so-called kissing number problem for lattice sphere packings. As above, values are only known in a finite number of cases ($n=1-9, 24$, see \cite{coslbook} and references therein). In some cases the solutions to the two problems coincide (but not in general, for instance they are known to be different for $n=9$). As an example, when $n=2$, the answer is $6$ for both problems and the optimal solution is given by points lying on the hexagonal lattice. In this case, the question is equivalent to asking how many shortest (nontrivial) vectors a lattice can contain. In terms of {\it systoles} (meaning shortest non-trival curves of a manifold), how many distinct isotopy classes of systoles can a flat torus have? The kissing number problem for lattices is the natural generalization: how many isotopy classes of systoles can the underlying $n$-torus contain?

Another possible generalization - proposed by Schmutz Schaller - is to hyperbolic closed surfaces. We ask the same question where genus plays the part of dimension. For hyperbolic surfaces there is a unique geodesic in a prescribed nontrivial isotopy class, so the question is as follows.

{\it How many systoles can a hyperbolic surface of genus $g$ have?}

By analogy with the lattice case, Schmutz Schaller defined the maximum number for each genus to be the kissing number for hyperbolic surfaces. Usually, we consider simple closed geodesics, and systoles in particular, to be unoriented objects, so morally we should be dividing lattice sphere packing kissing numbers by two to make the two problems truly analogous. As in the case of lattices, very little is known about exact values: the only known case is for genus $2$ where the answer is $12$. 

A related problem for systoles of hyperbolic surfaces is to the search for surfaces with systole of maximal length, again initiated by Schmutz Schaller \cite{sc931}. As before, there is a possible analogy with lattices: this is the hyperbolic surface version of the search for the optimal Hermite constants. (The Hermite constant can be thought of as the square of the maximal systole length of unit volume flat torus.) For each genus, via a compactness argument, there exists a hyperbolic surface with longest possible systole. Again, the only known exact value is for genus $2$, a result of Jenni \cite{je84}. It is realized by the same surface as for the kissing number problem - the so-called Bolza surface. The Bolza surface is also maximal among genus $2$ hyperbolic surfaces for the number of self-isometries. In the non-compact cases, arithmetic surfaces coming from principal congruence subgroups of $\PSL_2(\Z)$ are known to be maximal among finite area hyperbolic surfaces in their respective moduli spaces, a result of Schmutz Schaller \cite{sc941}, see also \cite{ad98} for another proof. For both problems, although finding exact solutions seems difficult, one can ask for upper and lower bounds, and in particular it is natural to study the asymptotic growth of how these constants vary in function of genus. Via a simple area argument, it is easy to see that the systole length of a hyperbolic surface cannot exceed $2 \log g$ ($+C$ for some constant $C>0$). Buser and Sarnak \cite{busa94} were the first to construct families of surfaces with $\sim \log g$ systole growth. More precisely, they showed
$$
\limsup_{g\to \infty}\frac{ \max_{S\in \M_g} \sys(S)}{ \log g}\geq \frac{4}{3}
$$
where $\M_g$ is the moduli space of all hyperbolic surfaces up to isometry and $\sys(S)$ is the length of the systole of $S$. Since then there have been other constructions, see for example \cite{kascvi07}, but all are based on arithmetic methods of some sort. The existence of such surfaces is somewhat surprising: the radius of a maximally embedded disk in a hyperbolic surface is $\sim \log g$, so a surface with $\sim \log g$ systole looks essentially everywhere like a fat disk that is pasted together in some clever way in order to avoid creating a short loop somewhere.

Schmutz Schaller wrote a series of papers for the hyperbolic surface kissing number problem, in both the compact and non-compact cases, where he proved some interesting lower bounds. In first instance, one might think that it might not be possible to have a surface where the number of systoles is considerably bigger than the size of a maximal isometry group. Via Hurwitz's bound, this would imply an upper bound to the number of systoles which grows linearly in genus. Schmutz Schaller's first result on this was the existence of families of surfaces with a number of systoles that grew more than linearly in genus \cite{sc962,sc964}. The best results for closed surfaces appeared later \cite{sc97} where he showed that
$$
\limsup_{g\to \infty} \frac{ \log \left(\max_{S\in \M_g} \numS\right)}{\log g} \geq \frac{4}{3}-\varepsilon
$$
for any $\varepsilon>0$ where $\numS$ denotes the number of systoles of $S$. Stated otherwise, for any $\varepsilon>0$, there exists a family of surfaces, one in each genus, with more than $g^{\sfrac{4}{3}-\varepsilon}$ systoles (for large enough $g$). Again, the construction is based on arithmetic methods. Based on the intuition that one cannot do better for arithmetic surfaces  and that arithmetic surfaces should be optimal for these problems, Schmutz Schaller \cite{sc98} made two conjectures, namely that the inequalities above are in fact equalities (with the right hand side equal to $\sfrac{4}{3}$). For the sake of avoiding repititious repetition, we rephrase them as follows.
\begin{conjecture}\label{conj:size}
There exists a constant $A>0$ such that
$$
\max_{S\in \M_g} \sys(S) \leq \frac{4}{3} \log g +A.
$$
\end{conjecture}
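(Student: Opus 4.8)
Since Conjecture~\ref{conj:size} is open, what follows is the natural line of attack, together with a frank account of why it is hard. The one robust piece of input is the elementary area estimate: a closed hyperbolic surface $S$ of genus $g$ has $\area(S)=4\pi(g-1)$, while the metric ball $B\bigl(p,\tfrac12\sys(S)\bigr)$ about any point is embedded, with area $2\pi\bigl(\cosh(\tfrac12\sys(S))-1\bigr)$. Comparing gives $\cosh\bigl(\tfrac12\sys(S)\bigr)\le 2g-1$, hence $\sys(S)\le 2\log g+O(1)$, so the sole content of the conjecture is to push the leading constant from $2$ down to $\tfrac43$. A bound $\sys(S)\le \tfrac43\log g+O(1)$ would follow from any of: (i) producing on every genus-$g$ surface roughly $g^{1/3}$ pairwise disjoint embedded balls of radius $\tfrac12\sys(S)$ — since $N$ disjoint such balls give $\cosh\bigl(\tfrac12\sys(S)\bigr)\le 2(g-1)/N+1$; (ii) an intrinsic systolic inequality bounding $\sys(S)$ by a volume-type invariant of $S$ that grows like $g^{2/3}$ rather than $g$; or (iii) a structure theorem forcing near-extremal surfaces to be geometrically homogeneous in a way that mimics the Buser--Sarnak congruence covers, which realise exactly the packing ratio the conjecture claims is optimal.

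The difficulty is that each of the obvious implementations of these falls short of even getting below $2$. For (i), a maximal packing by radius-$r$ balls has its doubled balls covering $S$, so it contains $\gtrsim g/e^{2r}$ balls; with $r=\tfrac12\sys(S)$ this is $\gtrsim g^{1/3}$ only when $\sys(S)\le \tfrac23\log g$, i.e.\ the argument is strong enough only where the conclusion is already far stronger than needed. Replacing the single ball by a combinatorial model — cut $S$ along a short pants decomposition to obtain a trivalent graph $G$ on $2g-2$ vertices — does not help either: via Bers-type bounds this route controls $\sys(S)$ only by a polynomial in $g$, and the Moore bound for cubic graphs translates a short essential loop into girth $\lesssim (2/\log 2)\log g$, a \emph{worse} constant than $2$. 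So a genuinely new ingredient is needed, and I would look for it in route (iii): a rigidity statement showing that if $\sys(S)\ge 2\log g-K$ then no embedded ball of radius $\approx\tfrac12\sys(S)$ can occupy more than a vanishing fraction of the area, from which the required $\asymp g^{1/3}$ disjoint balls could then be extracted greedily. Proving such a statement would presumably pass through understanding which homology classes of $S$ carry short representatives and how a near-systole must thread through the thick part.

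The main obstacle, then, is exactly the gap that has kept the problem open: the area estimate is blind to the topology of $S$, so any improvement must genuinely exploit that $H_1(S;\Z)$ has rank $2g$ and that a uniformly thick surface supports order $g$ independent loops of length $O(\log g)$, and quantifying this with a sharp constant is precisely what no one has managed. One should not expect help from the first half of the paper: pairing its sub-quadratic upper bound on $\numS$ with a lower bound $\numS\ge f(\sys(S))$ for some rapidly growing $f$ would be the only way to route through the kissing number, but no such lower bound holds — a surface can carry one long systole and almost no others, and even at a local maximum of the systole function only $\sim 6g$ systoles are forced — and the arithmetic of the resulting inequality cuts the wrong way in any case. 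In short, I expect the crux to be a new rigidity phenomenon for near-maximal surfaces rather than any single computation; and, since the best gap presently leaves the truth anywhere between $\tfrac43\log g$ and $2\log g+O(1)$, one should keep in mind the possibility that the conjectured constant $\tfrac43$ is not itself sharp.
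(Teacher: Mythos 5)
You have correctly recognized that this statement is an \emph{open conjecture} due to Schmutz Schaller, stated in the paper as motivation rather than proved; the paper gives no proof of it, and only uses the elementary $2\log g$ bound (which you derive correctly) as input to Corollary~\ref{cor:subquad}. Your surrounding discussion --- the area argument, the observation that $\sim g^{1/3}$ disjoint systolic balls or a Moore-type girth bound with the right constant would suffice, and the reasons those routes stall at or above the constant $2$ --- is accurate and consistent with the state of the art described in the paper, so there is nothing to correct.
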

\begin{conjecture}\label{conj:number}
There exists a constant $B>0$ such that
$$
\max_{S\in \M_g} \numS \leq B g^{\sfrac{4}{3}}.
$$
\end{conjecture}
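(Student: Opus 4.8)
\emph{A strategy toward Conjecture~\ref{conj:number}.}
Fix $S\in\M_g$ realizing the maximum and write $\ell=\sys(S)$ and $N=\numS(S)$; note that $N$ is exactly the number of simple closed geodesics of length $\le\ell$ on $S$, nothing being shorter than a systole. The first step is a reduction: $g^{\sfrac{4}{3}}$ and $e^{\ell}$ have the same order of magnitude precisely when $\ell\approx\tfrac{4}{3}\log g$, so Conjecture~\ref{conj:size} supplies $e^{\ell}\le e^{A}g^{\sfrac{4}{3}}$ and it is enough to prove the genus-uniform estimate
\[
N \le B'\bigl(g+e^{\ell}\bigr)
\]
for an absolute constant $B'$, and then invoke Conjecture~\ref{conj:size}. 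The $e^{\ell}$ term is the right target: the prime geodesic theorem counts \emph{all} primitive closed geodesics of length $\le L$ as $\sim e^{L}/L$ (topological entropy one), and the arithmetic surfaces of Schmutz Schaller already realize $N\asymp e^{\ell}$, so $O(g+e^{\ell})$ is the correct shape to aim for.

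The collar toolkit gets us partway but no further. In the favorable case where the systoles happen to be pairwise disjoint, the collar lemma would give
\[
N \le \frac{4\pi(g-1)}{2\ell/\sinh(\ell/2)} \asymp \frac{g\,e^{\ell/2}}{\ell},
\]
which is $O\!\bigl(g^{\sfrac{5}{3}}/\log g\bigr)$ under Conjecture~\ref{conj:size} --- but the systoles of the extremal (arithmetic) surfaces are very far from disjoint, intersecting each other a great deal, so this bound is unavailable and the honest collar estimate is merely the subquadratic one proved in this paper. The loss is structural: the collar of a systole of length $\sim\log g$ has area $\sim(\log g)/\sqrt{g}\to 0$, so a first-moment packing argument cannot see the global obstruction that a single genus-$g$ surface is carrying all $N$ of the curves at once.

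The heart of the matter is thus a genuinely global count: bound the number of simple closed geodesics of length $\le L$ on a genus-$g$ hyperbolic surface with constants uniform in $g$ (or growing only like $g^{o(1)}$), i.e.\ establish $N=O(g+e^{L})$. Three natural routes stall. On a \emph{fixed} surface this count is polynomial in $L$, but its constants blow up with the genus, so that bound is inert here. The prime geodesic theorem does produce the main term $\sim e^{L}/L$, but its error term is governed by the small eigenvalues of the Laplacian, which proliferate in high genus; a uniform spectral gap for systole-maximal surfaces is itself open, and without it the trace formula yields no uniform bound. And purely combinatorial counts --- Dehn--Thurston coordinates, spines, short pants decompositions --- lose factors exponential in $g$, for exactly the reason that makes the Buser--Sarnak and arithmetic constructions possible: a simple closed geodesic of length $\ell$ on a surface whose systole is not short need \emph{not} be confined to a subsurface of bounded complexity, and may wander through $\Omega(\ell)=\Omega(\log g)$ of the topology of $S$, so no naive localization applies.

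The main obstacle, then, is squeezing this count down to $g^{o(1)}e^{\ell}$. I would expect that to require one of: a genuinely new global counting principle for simple closed geodesics, uniform in genus; an \emph{a priori} spectral gap for near-extremal surfaces; or a rigidity theorem forcing the extremal surfaces to be arithmetic --- Schmutz Schaller's guiding intuition --- which would let one import the arithmetic count directly. Consequently I do not expect Conjecture~\ref{conj:number}, at the conjectured exponent, to yield to the collar, packing and intersection-number toolkit alone; progress on it looks inseparable from progress on Conjecture~\ref{conj:size} and on the fine structure of systole-maximal surfaces. The realistic deliverable remains the subquadratic bound of this paper, with the packing heuristic above isolating where the outstanding polynomial factor is lost.
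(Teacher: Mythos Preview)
This statement is a \emph{conjecture}, not a theorem, and the paper does not prove it; there is no proof in the paper to compare against. Your submission is likewise not a proof but an obstacle analysis, and at that level it is broadly accurate: the paper's best result is Theorem~\ref{thm:mainhyperbolicintro}, which even under Conjecture~\ref{conj:size} yields only $\numS = O(g^{\sfrac{5}{3}}/\log g)$ --- the paper says exactly this in the introduction --- and the gap between $\sfrac{5}{3}$ and $\sfrac{4}{3}$ is the one you identify.

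Two corrections to the details. Your ``favorable disjoint case'' paragraph is muddled on both ends. If the systoles are pairwise disjoint then $\numS\le 3g-3$, which is linear and far stronger than $g\,e^{\ell/2}/\ell$. Conversely, the bound $U\,g\,e^{\ell/2}/\ell$ is not a disjoint-collar estimate at all: it is precisely Theorem~\ref{thm:mainhyperbolicintro}, proved for \emph{all} hyperbolic surfaces via the angle and distance constraints between \emph{intersecting} systoles (Lemmas~\ref{lem:angle} and~\ref{lem:dist}); the subquadratic Corollary~\ref{cor:subquad} is just that bound after substituting $\ell\le 2\log g+O(1)$, and substituting the conjectural $\ell\le\tfrac{4}{3}\log g+A$ gives the $g^{\sfrac{5}{3}}/\log g$ you quote. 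So ``this bound is unavailable'' has it backwards. Second, your proposed target $N\le B'(g+e^{\ell})$ is a coherent reduction and would indeed combine with Conjecture~\ref{conj:size} to yield Conjecture~\ref{conj:number}, but note that you are then assuming one open conjecture to deduce another via an inequality strictly stronger than anything the paper proves. Your closing assessment --- that the sharp exponent appears tied to Conjecture~\ref{conj:size} or to rigidity of the extremal surfaces --- agrees with the paper's framing and with Schmutz Schaller's original motivation.
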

The two problems seem to run in parallel because of the arithmetic nature of the interesting known examples, but Schmutz Schaller did not establish a direct link between the two. In contrast to Conjecture \ref{conj:size}, there was no ``easy" known upper bound for Conjecture \ref{conj:number} that behaves roughly as conjectured. One might expect an easy quadratic upper bound for the number of systoles which would seem to be the counterpart to the straightforward $2 \log g$ bound.  (In fact Schmutz Schaller claims such a quadratic bound in \cite{sc942} but the argument, supposedly only based on the topological condition that two systoles pairwise intersect at most once, is faulty and does not seem to be easily repairable.)

The first result of this paper is to find an upper bound for $\numS$, which depends on the length of the systole of $S$.
\begin{theorem}\label{thm:mainhyperbolicintro}
There exists a constant $U>0$ such that for any hyperbolic surface $S$ of genus $g$ with systole $\ell$ the following holds:
$$
\numS \leq  U\, \frac{e^{\,\ell/2}}{\ell} \,g.
$$
\end{theorem}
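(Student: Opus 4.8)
The plan is to cover $S$ by the standard collars of its systoles and to count areas, once one knows that this covering has bounded multiplicity. Write $\ell$ for the systole length, $\Sys$ for the set of systoles viewed as closed geodesics, and $\mathcal C(\gamma)$ for the embedded collar of half-width $w(\ell)=\arcsinh\!\bigl(1/\sinh(\ell/2)\bigr)$ around $\gamma$, which has area $2\ell/\sinh(\ell/2)$. Two ingredients are essentially classical: (i) any two distinct systoles meet in at most one point — otherwise a surgery at two consecutive intersection points produces a shorter essential closed curve; and (ii) the collars of two disjoint simple closed geodesics are disjoint, so $\mathcal C(\gamma)\cap\mathcal C(\gamma')\neq\emptyset$ forces $\gamma\cap\gamma'\neq\emptyset$, hence by (i) exactly one intersection point. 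Granting the key estimate below — that some absolute constant $c$ bounds the number of collars through any point of $S$ — set $f(x)=\#\{\gamma\in\Sys : x\in\mathcal C(\gamma)\}$; then $\numS\cdot\frac{2\ell}{\sinh(\ell/2)}=\sum_{\gamma\in\Sys}\area\mathcal C(\gamma)=\int_S f\leq c\,\area(S)=4\pi c(g-1)$, and since $\sinh(\ell/2)\leq e^{\ell/2}/2$ this yields $\numS\leq \pi c\,\frac{e^{\ell/2}}{\ell}\,g$, i.e.\ the theorem with $U=\pi c$.

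Everything thus reduces to proving that at most $c$ systoles pass within $w(\ell)$ of any given point of $S$, for an absolute constant $c$. When $\ell< 2\arcsinh 1$ this is immediate: a geodesic crossing the (wide) collar of a systole this short would be longer than the systole, so all systoles are then pairwise disjoint and hence number at most $3g-3$, which already proves the theorem in this range because $e^{\ell/2}/\ell\geq e/2$. For $\ell$ below the Margulis constant one can instead invoke the Margulis lemma directly: several systoles passing within $w(\ell)$ of a point $p$ produce short elements of $\pi_1(S,p)$ realized near a common lift $\tilde p$, so the subgroup they generate is elementary, forcing those systoles to coincide. The substantive case is $\ell\geq 2\arcsinh 1$, where the collar is thin and, by (i)--(ii), systoles with meeting collars cross exactly once. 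Here I would work in the universal cover: the systoles meeting a small ball $B(\tilde p, w(\ell))$ lift to geodesics through that ball which are axes of deck transformations of translation length exactly $\ell$; if too many are present, two of them lie within $2w(\ell)$ of one another, and since the systoles meet at most once on $S$ either these lifts are nearly parallel or they cross at a small angle — in both situations a short computation with the hyperbolic trace/length identities shows that the relevant product or commutator of the two isometries has translation length strictly below $\ell$, contradicting minimality of the systole. Turning this into a uniform bound on $c$, and in particular controlling the small-crossing-angle configuration as cleanly as the nearly-parallel one, is the main obstacle and is where the bulk of the work lies.

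Assembling the pieces then gives the theorem for all $g$: the factor $g$ (rather than the full area $4\pi(g-1)$) and the shape $e^{\ell/2}/\ell$ of the bound are exactly what the area of a systolic collar produces, so nothing is lost beyond the multiplicity constant $c$, and the remaining steps are bookkeeping.
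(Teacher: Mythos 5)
The essential step---that the multiplicity $f(x)$ of the systolic-collar cover is bounded by an \emph{absolute constant} $c$---is false, and this is where your proof breaks. What is actually true, and is the content of the paper's estimate ``$G(S)$'' in the proof of Theorem~\ref{thm:mainhyperbolic}, is $f(x)\lesssim e^{\ell/4}$: systoles passing within $w(\ell)$ of $x$ must pairwise intersect near $x$ at angles at least $\theta_\ell$ with $\sin\theta_\ell=\tfrac{1}{2\cosh(\ell/4)}\approx e^{-\ell/4}$ (Lemma~\ref{lem:angle}), and a packing count on a nearby boundary circle gives roughly $1/\theta_\ell\sim e^{\ell/4}$ as the maximal number of such geodesic lines. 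Nothing rules out intersection angles this small between systoles; the angle lower bound is the \emph{only} constraint, so $e^{\ell/4}$ is the best available multiplicity bound. The commutator/trace argument you sketch, carried out honestly, reproduces exactly this angle lower bound and hence the $e^{\ell/4}$ multiplicity, not a constant. Plugging $f\lesssim e^{\ell/4}$ into your area inequality
$$
\numS\cdot\frac{2\ell}{\sinh(\ell/2)}=\int_S f\ \lesssim\ e^{\ell/4}\cdot 4\pi(g-1)
$$
gives only $\numS\lesssim g\,e^{3\ell/4}/\ell$, weaker than the theorem by a factor of $e^{\ell/4}$.

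The second, linked, issue is the tube width, and fixing it rescues the strategy. The standard collar half-width $w(\ell)\approx 2e^{-\ell/2}$ is too small; use instead $r(\ell)=\arcsinh\!\bigl(1/(2\sinh(\ell/4))\bigr)\approx e^{-\ell/4}$, the radius in the ``systolic collar lemma'' (Lemma~\ref{lem:scl}): any two systoles that come within $2r(\ell)$ of one another must intersect. The $r(\ell)$-neighborhoods of the systoles are not embedded collars, but that is irrelevant for an area argument: each has area $2\ell\sinh r(\ell)\approx 2\ell e^{-\ell/4}$, the same $e^{\ell/4}$ multiplicity bound applies, and now the inequality gives $\numS\lesssim e^{\ell/4}\cdot g/(\ell e^{-\ell/4})=g\,e^{\ell/2}/\ell$, which is the theorem. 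This is precisely what the paper's proof does, phrased there as a double count of ball--systole incidences: $\lesssim g\,e^{\ell/2}$ balls of radius $r(\ell)$ cover $S$, each meeting at most $\lesssim e^{\ell/4}$ systoles, and each systole crossing $\gtrsim \ell\,e^{\ell/4}$ balls. So your general scheme is parallel to the paper's, but as written both the choice of radius and the constant-multiplicity claim are wrong; the shortfall is by a factor $e^{\ell/4}$ and cannot be absorbed into bookkeeping.
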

Using the $2\,\log g$ bound on $\ell$ mentioned above, this has the following consequence.
\begin{corollary}\label{cor:subquad}
There exists a constant $U>0$ such that any hyperbolic surface of genus $g$ has at most $U \dfrac{g^2}{\log g}$ systoles.
\end{corollary}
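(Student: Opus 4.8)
The plan is to combine Theorem~\ref{thm:mainhyperbolicintro} with the elementary area bound $\sys(S)\le 2\log g + C$ quoted above, treating short and long systoles separately: for long systoles the bound of the theorem is already essentially what we want, whereas for very short systoles the factor $e^{\ell/2}/\ell$ blows up and a cruder argument is needed.

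Write $\ell=\sys(S)$ and set $\ell_0=2\log(1+\sqrt2)$. First I would treat the case $\ell<\ell_0$ using the Collar Lemma: a simple closed geodesic crossing a systole of length $\ell$ must traverse its embedded collar and hence has length at least $2\arcsinh\!\big(1/\sinh(\ell/2)\big)$; if that geodesic is itself a systole this forces $\ell\ge 2\arcsinh\!\big(1/\sinh(\ell/2)\big)$, equivalently $\sinh(\ell/2)\ge 1$, i.e.\ $\ell\ge\ell_0$. So when $\ell<\ell_0$ no two systoles meet, hence they form a family of pairwise non-isotopic disjoint essential simple closed curves on a genus $g$ surface and $\numS\le 3g-3$, far below a constant times $g^2/\log g$.

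It then remains to handle $\ell\ge\ell_0$, where the factor $1/\ell\le 1/\ell_0$ is harmless. If $\ell_0\le\ell<\log g$ then $e^{\ell/2}<\sqrt g$, so Theorem~\ref{thm:mainhyperbolicintro} gives $\numS\le (U/\ell_0)\,g^{3/2}\le (U/\ell_0)\,g^2/\log g$, using $\log g\le\sqrt g$; and if $\ell\ge\log g$ then the area bound gives $e^{\ell/2}\le e^{C/2}g$ while $1/\ell\le 1/\log g$, so $\numS\le Ue^{C/2}\,g^2/\log g$. Taking $U'=\max\{3,\,U/\ell_0,\,Ue^{C/2}\}$ (possibly enlarged to absorb small genera) completes the proof. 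The one point requiring care is the short-systole regime: there Theorem~\ref{thm:mainhyperbolicintro} is useless, and one must invoke the Collar Lemma to see that a very short systole cannot be crossed by another systole, collapsing the count to the trivial linear bound $3g-3$. The remaining estimates are just the monotonicity and size of $e^{\ell/2}/\ell$ on the interval $[\ell_0,\,2\log g+C]$.
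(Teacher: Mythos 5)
Your proof is correct and follows essentially the route the paper has in mind: the paper leaves the corollary to "elementary estimates," but the crucial ingredient you supply — that when $\sys(S)<2\arcsinh 1$ the collar lemma forces systoles to be disjoint and hence $\numS\le 3g-3$ — is exactly the case distinction already made at the start of the proof of the precise version, Theorem~\ref{thm:mainhyperbolic}. The remaining splitting of $[\ell_0,\,2\log g+C]$ into $\ell<\log g$ and $\ell\ge\log g$ is a clean way to handle the non-monotonicity of $e^{\ell/2}/\ell$ and matches the intended elementary computation.
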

More generally, if there exists an upper bound on length of systoles of type $C \log g$ for some $C>0$, then Theorem \ref{thm:mainhyperbolicintro} implies a bound of order $\sim g^{\sfrac{C}{2} + 1}$. In particular a positive answer to Conjecture \ref{conj:size} would imply that there are at most $\sim g^{\sfrac{5}{3}}$ systoles on a genus $g$ surface. Another consequence is that Conjecture \ref{conj:number} holds for all surfaces with systole bounded above by $\sfrac{2}{3} \log g$.

The bound also shows that if a family of surfaces has sub-logarithmic systole growth, then the number of systoles is ``almost" at most linear. More precisely:
\begin{corollary}\label{cor:linear}
Let $f(g)$ be any positive function with $\lim_{g\to \infty} \sfrac{f(g)}{\log g} = 0$. Then
$$
\max \{\numS \,|\, S\in \M_g{\text{ with }}\sys(S)\leq f(g)\} \leq g^{1+\varepsilon}.
$$
for any $\varepsilon>0$ and large enough $g$.
\end{corollary}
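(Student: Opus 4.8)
The plan is to deduce Corollary~\ref{cor:linear} directly from Theorem~\ref{thm:mainhyperbolicintro}, after splitting off the (harmless) case of very short systoles, for which the factor $e^{\ell/2}/\ell$ in the theorem is uninformative. Fix $\eps>0$ and a positive function $f$ with $f(g)/\log g\to 0$, and let $S\in\M_g$ be any surface with $\ell:=\sys(S)\le f(g)$; we want $\numS\le g^{1+\eps}$ for $g$ large.

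\emph{Case $\ell>1$.} Here I would simply feed the hypothesis into Theorem~\ref{thm:mainhyperbolicintro}. Since $\ell>1$ we have $e^{\ell/2}/\ell< e^{\ell/2}\le e^{f(g)/2}$, and because $f(g)/\log g\to 0$ there is $g_0=g_0(\eps,f)$ with $f(g)\le\eps\log g$ for all $g\ge g_0$, so $e^{f(g)/2}\le g^{\eps/2}$. Theorem~\ref{thm:mainhyperbolicintro} then gives $\numS\le U\,g^{1+\eps/2}$, which is at most $g^{1+\eps}$ once $g\ge U^{2/\eps}$.

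\emph{Case $\ell\le 1$.} Here the theorem is wasteful, and instead I would invoke the collar lemma: a simple closed geodesic $\gamma$ carries an embedded collar of half-width $\arcsinh\!\big(1/\sinh(\ell(\gamma)/2)\big)$, and any other simple closed geodesic meeting $\gamma$ transversally must cross this collar and hence have length at least $2\arcsinh\!\big(1/\sinh(\ell(\gamma)/2)\big)$. Applied to two intersecting systoles of common length $\ell$, this forces $\sinh(\ell/2)\ge 1$, i.e.\ $\ell\ge 2\arcsinh(1)>1$, contrary to $\ell\le 1$. Hence when $\ell\le 1$ all systoles of $S$ are pairwise disjoint; being also pairwise non-homotopic (each free homotopy class has a unique geodesic), they form part of a pants decomposition, so $\numS\le 3g-3\le g^{1+\eps}$ for $g$ large.

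Taking $g$ large enough to cover both cases proves the corollary. I do not expect any genuine obstacle: the only ingredient beyond Theorem~\ref{thm:mainhyperbolicintro} is the elementary remark that a hyperbolic surface whose systole lies below the collar constant $2\arcsinh(1)$ has at most $3g-3$ systoles, and this is needed purely to absorb the blow-up of $e^{\ell/2}/\ell$ as $\ell\to 0$; if one prefers, any crude linear-in-$g$ bound for geodesics shorter than a Margulis-type constant would do just as well.
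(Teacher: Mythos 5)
Your proof is correct and is essentially the argument the paper has in mind: it states only that the corollary ``follows from elementary estimates,'' and your Case~1 computation ($\ell\le f(g)\le\eps\log g$ for large $g$, hence $e^{\ell/2}/\ell\le g^{\eps/2}$) is the expected fill-in of Theorem~\ref{thm:mainhyperbolicintro}. Your Case~2 (small systole $\Rightarrow$ pairwise disjoint systoles $\Rightarrow$ $\numS\le 3g-3$) is needed precisely because $e^{\ell/2}/\ell\to\infty$ as $\ell\to 0$, and in fact this observation is already present verbatim in the paper's proof of Theorem~\ref{thm:mainhyperbolic}, where the threshold $2\arcsinh 1$ is used instead of your $1$ — either works since $1<2\arcsinh 1$.
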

The theorem implies more accurate than the above corollary for ``intermediate" growth, but the formulation above is given for clarity. In particular, this means that any family of surfaces with ``many" systoles (by which we mean at least $g^{1+a}$ for some $a>0$), then the family has $\sim \log g$ systole growth as well. Of course it is not a priori easier to construct surfaces with many systoles, but if one does, then the large size systoles come for free.

In the more general context of Riemannian metrics on surfaces, similar questions can be asked. One no longer has uniqueness of a geodesic in an isotopy class, so the appropriate question on the number of systoles is an upper bound on the number of distinct isotopy classes of simple closed curves that can simultaneously be realized as systoles for some metric. 

{\it How many systoles up to isotopy can a closed Riemannian surface of genus $g$ have?}

We'll relate to optimal upper bounds to this problem as the kissing number problem for general surfaces.

It is not known what sets of topological curves on either a hyperbolic or Riemannian surface can be realized by systoles. By a cutting and pasting argument, it is not too difficult to see that on any closed surface (not necessarily hyperbolic) a systole is necessarily a simple closed curve. Likewise, any two systoles can pairwise intersect at most once. This leads us to a related purely topological problem due to B. Farb and C. Leininger (see \cite{marith10}).

{\it Up to isotopy, how many distinct curves can be realized on a surface of genus $g$ such that they pairwise intersect at most once?}

It seems to be a surprisingly hard question to answer. The best known upper bound is in fact exponential, and the best lower bound quadratic \cite{marith10}. These numbers provide upper bounds for the kissing number problems, and to the best of the author's knowledge, these are the best known upper bounds for kissing numbers of general surfaces and were the best bounds even in the case of hyperbolic surfaces prior to Corollary \ref{cor:subquad}. Observe that Corollary \ref{cor:subquad} shows that optimal kissing numbers for hyperbolic surfaces cannot be the same as the numbers coming from the purely topological problem. It was already known that the topological condition was quite different from the systolic condition: in \cite{paanpesys}, there are constructions of configurations of isotopy classes curves that fail to be systoles for {\it any} Riemannian metric on the surface.

For completeness, we mention that the problem of finding an upper bound on systole length in the case of variable curvature can also be made to make sense. For a genus $g$ surface with area normalized to the area of its hyperbolic counterparts, Gromov \cite{gr83,gr96} gave a $\sim \log g$ upper bound, which in light of the hyperbolic examples is roughly optimal, but the precise asymptotic growth remains completely open. Whether the asymptotic growth for both problems should be different for both the upper bound on length or the kissing numbers is also unknown.

The second main result of this paper is about lower bounds for kissing number for Riemannian surfaces.
\begin{theorem}\label{thm:generalkiss}
There exist surfaces of genus $g>0$ with a number of systoles of order of growth at least $g^{\sfrac{3}{2}}$.
\end{theorem}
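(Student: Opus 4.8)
The plan is to construct the surfaces combinatorially, by thickening the complete graph $K_n$. Fix an explicit rotation system on $K_n$ --- for instance, at every vertex $i$ take the cyclic order of incident edges induced by the linear order $(1,2,\dots,\widehat i,\dots,n)$ of the remaining vertices. This rotation system turns $K_n$ into an oriented ribbon graph $\Sigma_n$: glue a disk at each vertex and an untwisted band along each edge, obtaining a compact orientable surface with boundary, and then cap each boundary component with a disk to get a closed surface $S_n$. I would metrize $S_n$ so that each band is a flat rectangle of a fixed length $L$ (carrying, as parallel straight segments, the strands of curves that run through it), each vertex-disk has diameter much smaller than $L$, and each capping disk is flat with boundary length equal to the combinatorial length of its face times $L$; the corners can be smoothed afterwards without affecting anything below.

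Two elementary computations set the scene. First, since $\Sigma_n$ deformation retracts onto $K_n$, every essential closed curve of $S_n$ is freely homotopic to one carried by a closed edge-walk, and --- taking the disks thin enough --- its length is at least $(1-\varepsilon)L$ times the number of bands it crosses; an essential curve must cross at least three bands because $K_n$ is simple, and the curves bounding the capping disks are null-homotopic and so do not count. Hence the systole of $S_n$ has length $\approx 3L$, realized exactly by the curves $\gamma_{\{i,j,k\}}$ carried by the triangles of $K_n$; there are $\binom n3$ of them, and for $n$ large they determine $\binom n3$ distinct, primitive, essential isotopy classes (two different triangles are never conjugate or inverse-conjugate in the surface group $\pi_1(S_n)$, which has large genus, so a word of length $\le 6$ in the edge generators cannot witness an identification unless it is already freely trivial). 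Second, tracing the faces of the chosen rotation system shows they all have length of order $n$, so $\chi(S_n)=n-\binom n2+F$ with $F$ of order $n$, whence the genus $g_n$ of $S_n$ is of order $n^2$. Since $\binom n3\asymp n^3\asymp g_n^{3/2}$, it then suffices to check that the $\gamma_{\{i,j,k\}}$ pairwise intersect at most once.

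Two triangles sharing no vertex give disjoint curves; two triangles sharing exactly one vertex $v$ give curves that meet only inside the disk $v$, as two arcs, hence at most once. The remaining case --- triangles $\{u,v,w\}$ and $\{u,v,w'\}$ sharing the edge $\{u,v\}$ --- is the crux, and this is exactly where the rotation system is used. The two curves run through the band of $\{u,v\}$ as distinct parallel strands, so they are disjoint there; inside the disk $u$, the arcs of all triangles through $\{u,v\}$ leave that band in the cyclic order in which their third vertices $\{u,\cdot\}$ appear around $\partial(\text{disk }u)$, so they form a non-crossing fan; and the compatibility built into the linear rotation (the rotations at $u$ and at $v$ agree, up to reversal, on $\{1,\dots,n\}\setminus\{u,v\}$) guarantees that these strands enter the band at the $v$-end in a matching --- hence again non-crossing --- order. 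So $\gamma_{\{u,v,w\}}$ and $\gamma_{\{u,v,w'\}}$ are in fact disjoint, and in all cases the geometric intersection number is at most $1$. Therefore $S_n$ would carry at least $\binom n3\asymp g_n^{3/2}$ systoles, giving the theorem along the family $\{S_n\}$; if one insists on every genus, the intermediate values can be reached by attaching handles.

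I expect the genuine obstacle to be precisely the compatibility used in the previous paragraph: one must pin down a rotation system for which every pair of triangles sharing an edge can be simultaneously pushed off one another, which is a constraint linking the local pictures at all vertices at once. Secondarily, one must control the bookkeeping around the capping disks --- ensuring no face is so short that capping it creates a short essential curve or collapses or identifies triangle curves --- which is why the explicit, vertex-transitive rotation system and the hypothesis ``$n$ large'' are convenient. Neither step involves hard analysis, but getting the face structure and the parallel-strand picture exactly right is where the work lies.
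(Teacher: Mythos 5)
Your construction follows the same outline as the paper's (thicken $K_n$ to a surface of genus $\asymp n^2$, use its $\asymp n^3$ triangles), but the logic around ``it suffices to check that the $\gamma_{\{i,j,k\}}$ pairwise intersect at most once'' is inverted: intersecting at most once is a \emph{necessary} consequence of a collection being systoles, not a condition that makes them systoles. What actually has to be arranged is that all the triangle geodesics have \emph{exactly} the same length, shorter than every other essential closed geodesic, and your flat metric does not give this. Crossing a vertex disk costs an amount that depends on which two half-edges the arc joins (a near-diametrical chord vs.\ a short corner arc), so the triangle geodesics have lengths spread across $[3L,\,3L+O(\text{disk diameter})]$ and only the minimizers are systoles; smoothing corners does not address this. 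Equalizing the lengths is the technical heart of the paper's proof: an iterative grafting construction (inserting flat cylinders along neighbourhoods of the currently longest chosen geodesics), which terminates because the selected curves form a connected chain under single-point intersections. You have no counterpart to this step.

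The obstacles you do flag are real, but the paper sidesteps rather than meets them. Your compatibility claim is wrong for the cyclic rotation: for the strands of the triangles $\{u,v,w\}$ (as $w$ varies) not to cross in the band of $\{u,v\}$, the cyclic order of the $w$'s around $u$ must be the \emph{reverse} of their order around $v$ (orientation flips across a band of an orientable ribbon graph), whereas your rotation makes the two orders \emph{equal}. And ``a word of length $\le 6$ cannot witness an identification'' is not a valid argument as stated, since $T_1 = g T_2 g^{-1}$ yields a relator of length $6 + 2|g|$ for arbitrary $g$; identifications do occur, e.g.\ for $K_4$ with the cyclic rotation the surface is a torus with a $4$-gon face $(1,2,3,4)$, and the triangles $\{1,2,3\}$ and $\{1,3,4\}$ sum in $H_1$ to this face class and become isotopic after capping. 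The paper avoids both issues by restricting to the $\approx n^3/6$ triangles that, at some vertex, \emph{separate} the remaining half-edges into two nonempty sets, and by proving pairwise non-isotopy through algebraic intersection numbers (each selected curve intersects some other selected curve exactly once, and any two distinct selected curves intersect some third one differently), needing neither a specially compatible rotation system nor any word-length argument in $\pi_1$.
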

This shows that if Conjecture \ref{conj:number} is correct, the asymptotic growth for kissing numbers is considerably different in the case of variable curvature surfaces. Recall the best known - and conjectured optimal - lower bounds for hyperbolic surfaces are roughly $g^{\sfrac{4}{3}}$. 

The proof is by construction, and the geometry of the surfaces come from embeddings of complete graphs. One might wonder if one can use these techniques to find hyperbolic surfaces with the same behavior, but the surfaces are far from being hyperbolic. In fact, in striking contrast to what is possible in the hyperbolic case (Corollary \ref{cor:linear}), the systole length is quite small proportionally to area (constant systole length for area $\sim g$).

\section{Bounds on numbers of systoles of hyperbolic surfaces}

We denote $\numS$ the number of systoles of a surface $S$. Before proceeding to proofs of upper bounds on $\numS$ for hyperbolic surfaces, we begin with some observations on the geometry of systoles.

\subsection{Geometric properties of systoles}

The estimates needed rely essentially on trigonometric arguments in the hyperbolic plane. Recall the classical collar lemma.

\begin{lemma}[Collar lemma]
Let $\gamma$ be a simple closed geodesic of length $\ell$. Then there is an embedded collar of width $w(\ell)$ around $\gamma$ where
$$
w(\ell)= \arcsinh\left(\frac{1}{\sinh(\ell/2)}\right).
$$
Furthermore, any simple closed geodesic $\delta$ that enters this collar essentially intersects $\gamma$.
\end{lemma}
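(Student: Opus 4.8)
The plan is to run the classical argument in the universal cover. Write $S=\Hyp/\Gamma$ with $\Gamma=\pi_1(S)$, lift $\gamma$ to a complete geodesic $A\subset\Hyp$, and let $h\in\Gamma$ be the primitive hyperbolic isometry with axis $A$ and translation length $\ell$, so that $\langle h\rangle=\operatorname{Stab}_\Gamma(A)$. For $w>0$ put $T_w(A)=\{z\in\Hyp:\operatorname{dist}(z,A)<w\}$; this set is convex, hence simply connected, so $T_w(A)/\langle h\rangle$ is always an annulus, and the covering $\Hyp\to S$ sends it by a local isometry onto the metric neighbourhood $\{p\in S:\operatorname{dist}(p,\gamma)<w\}$. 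That local isometry is injective --- i.e.\ the collar of width $w$ is an embedded annulus --- exactly when $gT_w(A)\cap T_w(A)=\varnothing$ for every $g\in\Gamma\setminus\langle h\rangle$. For such $g$ the geodesic $gA$ is the axis of $ghg^{-1}$, hence another lift of $\gamma$, and $gT_w(A)=T_w(gA)$; so everything reduces to a statement purely about configurations of geodesics in $\Hyp$: any lift $A'\neq A$ of $\gamma$ must stay far enough from $A$, the threshold being exactly what the formula for $w(\ell)$ records.

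To establish that separation estimate I would exploit two features. First, simplicity of $\gamma$: two distinct lifts never cross, and in a discrete torsion-free group they cannot even share an ideal endpoint, so distinct lifts are disjoint with four distinct endpoints on $\partial\Hyp$. Second, equivariance: together with $A'$, every translate $h^{k}(A')$ is again a lift of $\gamma$, and $h(A')\neq A'$ (otherwise $h$ would stabilise $A'$, forcing $A'=A$). Normalising so that $A$ is the imaginary axis of the upper half-plane and $h\colon z\mapsto e^{\ell}z$, a lift $A'$ disjoint from $A$ has both ideal endpoints on one side of $0$, and the requirement that $A'$ be disjoint from all its translates $h^{k}(A')$ bounds how eccentric $A'$ may be --- equivalently, how closely it may approach $A$. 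Turning this non-crossing constraint into the sharp lower bound on $\operatorname{dist}(A,A')$ is where hyperbolic trigonometry enters: the clean way is either to invoke the explicit cross-ratio formula for the distance between two geodesics of $\Hyp$, or to drop the common perpendicular of $A$ and $A'$ and fit it together with perpendiculars to $A$ into a right-angled pentagon, using the pentagon identity $\cosh(\cdot)=\sinh(\cdot)\sinh(\cdot)$; this produces precisely the width $\arcsinh\!\bigl(1/\sinh(\ell/2)\bigr)$, the extremal case being the limiting configuration in which $A'$ degenerates to share an ideal endpoint with one of its translates.

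For the ``furthermore'' I would argue by contradiction. Suppose a simple closed geodesic $\delta$ meets the collar of $\gamma$ but $\delta\neq\gamma$ and $\delta$ does not cross $\gamma$; then $\gamma$ and $\delta$ are disjoint simple closed geodesics, the full family of lifts of $\gamma\cup\delta$ still consists of pairwise non-crossing geodesics, and the same argument as above yields embedded collars of widths $w(\ell)$ and $w(\ell_\delta)$ around $\gamma$ and $\delta$ that are moreover disjoint from one another. But $\delta$ lies inside its own collar, so it cannot meet the collar of $\gamma$, contradicting the hypothesis. Hence $\delta$ crosses $\gamma$, i.e.\ it intersects $\gamma$ essentially.

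The conceptual skeleton --- pass to $\Hyp$, interpret the collar as a quotient tube, reduce embeddedness to a separation estimate for lifts, then use simplicity together with $\langle h\rangle$-equivariance --- is routine; the real work, and the place where the precise constant is won or lost, is the trigonometric step identifying the worst-case arrangement of lifts of $\gamma$. Along the way one should also dispose of a couple of degenerate situations (a common perpendicular that closes up into a short closed geodesic, or $\gamma$ itself wrapping around inside $T_w(A)$), but these are taken care of by primitivity of $h$ and convexity of $T_w(A)$.
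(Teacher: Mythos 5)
The paper does not actually prove this lemma: it is recalled as classical (``Recall the classical collar lemma'') with no argument and no citation, so there is no in‑paper proof to compare against. The assessment below is therefore of your argument on its own terms.

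Your reduction in the first paragraph is correct and is the right framing: the collar of half‑width $w$ around $\gamma$ is an embedded annulus exactly when $T_w(A)\cap gT_w(A)=\varnothing$ for every $g\in\Gamma\setminus\langle h\rangle$, i.e.\ exactly when $\operatorname{dist}(A,A')\ge 2w$ for every lift $A'\ne A$ of $\gamma$. The gap is that the separation you then establish is only $\operatorname{dist}(A,A')\ge w(\ell)$, not $\ge 2w(\ell)$, so your argument only yields an embedded collar of half the advertised width. Concretely: normalise $A$ to the imaginary axis and $h(z)=e^{\ell}z$, and put the endpoints of $A'$ at $e^{-t},e^{t}$ ($t>0$). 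The requirement that $A'$ and $h(A')$ do not cross says precisely $t\le\ell/2$, and the distance between $A$ and $A'$ is $\operatorname{dist}(A,A')=\ln\coth(t/2)\ge\ln\coth(\ell/4)=\arcsinh\!\left(1/\sinh(\ell/2)\right)=w(\ell)$. Your own description of the extremal case (``$A'$ degenerates to share an ideal endpoint with one of its translates,'' which is exactly $t=\ell/2$, giving an ideal Lambert quadrilateral and $\sinh(\ell/2)\sinh d=1$) confirms this: the infimum of $\operatorname{dist}(A,A')$ under the pairwise non‑crossing constraint is $w(\ell)$, not $2w(\ell)$. So the non‑crossing of lifts, by itself, is an information‑theoretically insufficient constraint: it proves the collar lemma with $w(\ell)$ replaced by $w(\ell)/2$, i.e.\ with $\sinh$ replaced by roughly $2\sinh$ in the formula.

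The missing factor of two is exactly what the classical proof (Keen, Randol, Buser) recovers by exploiting more structure than ``lifts of a simple geodesic are disjoint.'' In one standard form, one shows that any essential geodesic arc $c$ on $S$ from $\gamma$ to $\gamma$, perpendicular to $\gamma$ at both endpoints, has length $\ge 2w(\ell)$. A regular neighbourhood of $\gamma\cup c$ is a hyperbolic pair of pants with $\gamma$ among its boundary geodesics; cutting the pants into two right‑angled hexagons, the arc $c$ meets the interior seam perpendicularly at its midpoint, and \emph{each half} of $c$ closes up a right‑angled pentagon whose identity $\cosh(\cdot)=\sinh(\cdot)\sinh(\cdot)$ yields $\sinh(\ell(c)/2)\,\sinh(\ell/2)\ge 1$, hence $\ell(c)\ge 2w(\ell)$. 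That ``$c$ splits into two symmetric halves'' is precisely the geometric input beyond mere disjointness of lifts, and it is where the factor of two lives. Without some argument of this kind, the step from ``lifts are pairwise disjoint'' to the stated width is unjustified. The same caveat applies to your ``furthermore'' paragraph: the disjointness of the two collars around $\gamma$ and $\delta$ is not a corollary of each collar being embedded; it is the analogous estimate $\operatorname{dist}(A_\gamma,A_\delta)\ge w(\ell)+w(\ell_\delta)$ between lifts of the two curves, and again needs the hexagon/pentagon input, not just ``lifts don't cross.''
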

For systoles, one can do even better.

\begin{lemma}[Systolic collar lemma]\label{lem:scl}
Let $\alpha$ and $\beta$ be systoles of length $\ell$ that don't intersect. Then there are at a distance at least $2 r(\ell)$ where
$$
r(\ell) =  \arcsinh\left(\frac{1}{2\,\sinh(\ell/4)}\right).
$$
\end{lemma}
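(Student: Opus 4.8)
The plan is to pull the two systoles apart in the universal cover $\Hyp^2$, read off from them a new hyperbolic isometry whose translation length is an explicit function of $\ell$ and of $d:=\mathrm{dist}_S(\alpha,\beta)$, and then play this length against the fact that $\ell$ is the length of the shortest closed geodesic of $S$.

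Concretely, I would fix a lift $\tilde\alpha$ of $\alpha$ and choose the lift $\tilde\beta$ of $\beta$ that is closest to it, so that $\tilde\alpha$ and $\tilde\beta$ are disjoint geodesics of $\Hyp^2$ joined by a common perpendicular segment of length $d$. Let $A$ and $B$ denote the primitive hyperbolic elements of the Fuchsian group $\pi_1(S)$ with axes $\tilde\alpha$, $\tilde\beta$; both have translation length $\ell$, so $|\mathrm{tr}(A)|=|\mathrm{tr}(B)|=2\cosh(\ell/2)$. Normalising $\Hyp^2$ so that the common perpendicular lies along the imaginary axis with endpoints $i$ and $ie^{d}$ makes $\tilde\alpha$, $\tilde\beta$ the semicircles of radii $1$ and $e^{d}$ about $0$, and a direct computation with the corresponding $2\times2$ matrices gives
$$
\mathrm{tr}(AB)=2\cosh^{2}(\ell/2)+2\cosh(d)\sinh^{2}(\ell/2),\qquad \mathrm{tr}(AB^{-1})=2\cosh^{2}(\ell/2)-2\cosh(d)\sinh^{2}(\ell/2).
$$

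Next I would single out the element $g\in\{AB,AB^{-1}\}$ whose trace carries the minus sign. Since $\alpha\neq\beta$ we have $A\neq B^{\pm1}$, so $g$ is nontrivial, and as $S$ is a closed hyperbolic surface, $g$ is hyperbolic, hence $|\mathrm{tr}(g)|>2$; on the other hand disjointness of the two geodesics forces $\cosh(d)>1$ and therefore $\mathrm{tr}(g)<2\cosh^{2}(\ell/2)-2\sinh^{2}(\ell/2)=2$. So $\mathrm{tr}(g)<-2$ and the closed geodesic freely homotopic to $g$ has length $L$ with $\cosh(L/2)=\cosh(d)\sinh^{2}(\ell/2)-\cosh^{2}(\ell/2)$. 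Because $\ell$ is the systole length, $L\geq\ell$, so $\cosh(L/2)\geq\cosh(\ell/2)$, which rearranges (using $\sinh^{2}(\ell/2)=(\cosh(\ell/2)-1)(\cosh(\ell/2)+1)$) to
$$
\cosh(d)\ \geq\ \frac{\cosh(\ell/2)+\cosh^{2}(\ell/2)}{\sinh^{2}(\ell/2)}\ =\ \frac{\cosh(\ell/2)}{\cosh(\ell/2)-1}.
$$
Finally, substituting $\cosh(\ell/2)=1+2\sinh^{2}(\ell/4)$ turns the right-hand side into $1+\dfrac{1}{2\sinh^{2}(\ell/4)}=\cosh\!\big(2r(\ell)\big)$ with $r(\ell)=\arcsinh\!\big(1/(2\sinh(\ell/4))\big)$, and since $\cosh$ is increasing on $[0,\infty)$ this is exactly $d\geq 2r(\ell)$.

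The trace formulas and the closing trigonometric identity are routine. The step that really needs care — and where the sharp constant is born — is isolating the correct element $g$: the product $AB$ yields only a vacuous inequality, and one must argue that the other product is automatically hyperbolic with trace below $-2$, rather than elliptic, parabolic, or of trace in $[-2,2]$. This is precisely where I use that $S$ is closed (no torsion, no parabolics) together with $\cosh(d)>1$. A minor subtlety to flag is that $g$ could a priori be a proper power of a primitive element, but since every closed geodesic of $S$ has length at least $\ell$, that possibility only strengthens the inequality.
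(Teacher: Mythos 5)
Your argument is correct, and it reaches the same inequality as the paper, but it dresses the geometry differently. The paper closes up a pair of pants with cuffs $\alpha,\beta$ and a third boundary geodesic $\gamma$ freely homotopic to $\alpha * c * \beta * c^{-1}$, then invokes the standard right-angled hexagon (pants) formula relating $\ell(\alpha),\ell(\beta),\ell(\gamma)$ to the perpendicular distance $d$, and uses $\ell(\gamma)\geq\ell$. Your element $g=AB^{-1}$ \emph{is} exactly the deck transformation of that third cuff, and the trace identity
$$
\cosh(L/2)=\cosh(d)\sinh^{2}(\ell/2)-\cosh^{2}(\ell/2)
$$
is nothing other than the pants formula in $\mathrm{SL}_2(\R)$ clothing. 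So the content is the same, but the route is genuinely different in presentation: the paper gets to cite the pants formula and skip the casework, whereas you derive everything from matrices and must therefore argue (as you correctly do) that $g$ is nontrivial and, since $S$ is closed, automatically hyperbolic, so its trace cannot land in $[-2,2]$ and must in fact be $<-2$; you also correctly identify that $AB$ gives a vacuous bound and that $AB^{-1}$ is the productive product, which in the pants picture corresponds to the orientation choice the paper makes (``so that $\gamma$ is simple''). The trade-off is that your version is more self-contained and makes explicit the role of the closedness of $S$, at the cost of some matrix bookkeeping; the paper's version is shorter but leans on prior pants trigonometry. Both land on $\cosh d\geq\cosh(\ell/2)/(\cosh(\ell/2)-1)=\cosh(2r(\ell))$ after the identity $\cosh(\ell/2)=1+2\sinh^{2}(\ell/4)$, so your computation is correct and yields the sharp constant.
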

\begin{proof}
Take a shortest path $c$ between $\alpha$ and $\beta$ and consider the geodesic $\gamma$ in the free homotopy class of $\alpha * c * \beta * c^{-1}$ where $\alpha,\beta$ and $c$ are oriented so that $\gamma$ is simple (see figure \ref{fig:orientedpants}). Note that $\alpha,\beta,\gamma$ form a pair of pants.
\vspace{-6pt}
\begin{figure}[h]
\leavevmode \SetLabels
\L(.37*.92) $\alpha$\\
\L(.62*.92) $\beta$\\
\L(.49*.11) $\gamma$\\
\L(.49*.6) $c$\\
\endSetLabels
\begin{center}
\AffixLabels{\centerline{\epsfig{file =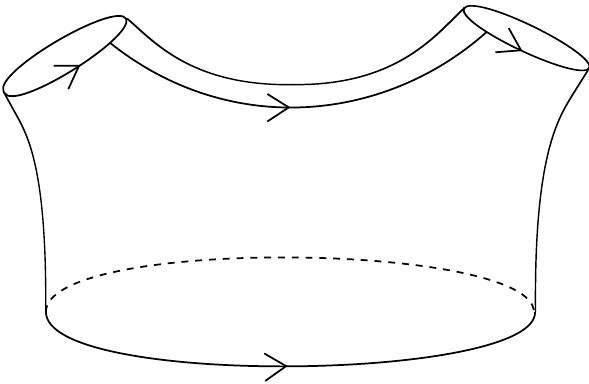,width=4.5cm,angle=0}}}
\vspace{-24pt}
\end{center}
\caption{Orientations of $\alpha$, $\beta$ and $c$} \label{fig:orientedpants}
\end{figure}
Now because $\ell(\gamma)\geq \ell$, we can compute the minimal distance between $\alpha$ and $\beta$. The result follows from standard trigonometry in a pair of pants and the double $\sinh$ formula.
\end{proof}

From this the following corollary is  immediate.

\begin{corollary}\label{cor:close} If $\gamma$ and $\delta$ are two systoles of length at most $\ell$ which pass through a same disk of radius $r(\ell)$, then they essentially intersect.
\end{corollary}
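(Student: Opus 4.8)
The plan is to read this off from the Systolic collar lemma by contraposition. Two preliminary remarks are in order. First, on a hyperbolic surface all systoles have the same length $\ell_0:=\sys(S)\le\ell$. Second, distinct simple closed geodesics are never tangent and bound no bigon, so two distinct systoles $\gamma$ and $\delta$ are either disjoint or essentially intersect, with no third possibility. Hence it suffices to show that two disjoint systoles cannot both pass through a common disk of radius $r(\ell)$.

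So I would assume $\gamma$ and $\delta$ are disjoint systoles both meeting a disk $D$ of radius $r(\ell)$ centred at a point $x$, and pick $p\in\gamma\cap D$ and $q\in\delta\cap D$. Applying Lemma \ref{lem:scl} to $\gamma$ and $\delta$ (which have length $\ell_0$) gives $d(\gamma,\delta)\ge 2\,r(\ell_0)$. The function $r$ is strictly decreasing and $\ell_0\le\ell$, so $r(\ell_0)\ge r(\ell)$ and therefore $d(\gamma,\delta)\ge 2\,r(\ell)$. On the other hand, the triangle inequality through $x$ gives $d(\gamma,\delta)\le d(p,q)\le d(p,x)+d(x,q)<2\,r(\ell)$, the last step because $p$ and $q$ lie in the open disk $D$. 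This contradiction forces $\gamma$ and $\delta$ to essentially intersect.

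There is essentially no obstacle here; the statement is genuinely a one-line consequence of Lemma \ref{lem:scl}. The only thing to watch is the reading of ``pass through a disk of radius $r(\ell)$'' as entering the \emph{open} disk: the bound $d(\gamma,\delta)\ge 2\,r(\ell_0)$ is sharp, being attained in the tight collar configuration, so two disjoint systoles can be tangent from outside to a common closed disk of that radius, and the strict inequality above is precisely what rules out this borderline case.
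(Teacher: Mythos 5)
Your proof is correct and is exactly the intended ``immediate'' argument: the paper does not spell out a proof, stating only that the corollary follows at once from the Systolic collar lemma, and your contrapositive via the triangle inequality (with the monotonicity of $r$ handling the case $\sys(S)<\ell$) is precisely that one-line deduction.
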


Using only this observation, it is possible to obtain a universal polynomial bound on the number of systoles of hyperbolic surfaces, but we need to work harder to obtain our estimates.

We begin by noticing that if two systoles intersect, then their angle of intersection can be bounded below by their length.

\begin{lemma}\label{lem:angle} Let $\gamma$ and $\delta$ be systoles of length $\ell$ that intersect. Then $\sin \angle(\gamma,\delta) > \frac{1}{2} \left(\cosh(\ell/4)\right)^{-1}. $
\end{lemma}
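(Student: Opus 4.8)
The plan is to pass to the universal cover and reduce the statement to a short trace computation in $\PSL_2(\R)$. Write $S=\Hyp/\Gamma$ with $\Gamma$ torsion-free and cocompact. Since $\gamma$ and $\delta$ are systoles they meet in a single point $p$ (two systoles intersect at most once), crossing transversally at some angle $\theta$; it is enough to bound $\sin\theta$ from below, since both $\sin\theta$ and $|\cos\theta|$ depend only on $\angle(\gamma,\delta)$ whatever the convention for that angle. Lift $p$ to $\tilde p\in\Hyp$ and let $g,h\in\Gamma$ be the hyperbolic elements whose axes are the lifts $L_\gamma,L_\delta$ of $\gamma,\delta$ through $\tilde p$. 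Because $\gamma,\delta$ are systoles they are primitive, so $g$ and $h$ have translation length exactly $\ell=\sys(S)$, and $L_\gamma,L_\delta$ cross at $\tilde p$ with angle $\theta$.

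The crux is to bring in two \emph{other} curves. Geometrically $\gamma$ and $\delta$ fill a one-holed torus in which $\gamma\delta$ and $\gamma\delta^{-1}$ are essential simple closed curves, but all I really need is that $gh$ and $gh^{-1}$ are non-trivial elements of $\Gamma$, hence hyperbolic, hence represented by closed geodesics of length at least $\sys(S)=\ell$. To compute their translation lengths, normalize so that $L_\gamma$ is the imaginary axis and $\tilde p=i$; then $g=\mathrm{diag}(e^{\ell/2},e^{-\ell/2})$ and $h=RgR^{-1}$, where $R$ is the rotation of $\Hyp$ by angle $\theta$ about $i$. A direct multiplication of $2\times2$ matrices (minding the factor of two relating a rotation by $\theta$ to its $\PSL_2(\R)$ matrix) gives
\[
\mathrm{tr}(gh)=2\cosh^2(\ell/2)+2\sinh^2(\ell/2)\cos\theta,\qquad \mathrm{tr}(gh^{-1})=2\cosh^2(\ell/2)-2\sinh^2(\ell/2)\cos\theta;
\]
for $\theta\in(0,\pi)$ both traces exceed $2$, confirming hyperbolicity.

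Now I feed in systolicity. For a hyperbolic $A\in\Gamma$ the associated closed geodesic has length $2\arccosh(|\mathrm{tr}(A)|/2)\ge\ell$, so $|\mathrm{tr}(gh)|$ and $|\mathrm{tr}(gh^{-1})|$ are each at least $2\cosh(\ell/2)$. Substituting into the two displayed identities and simplifying with $\sinh^2(\ell/2)=\cosh^2(\ell/2)-1$ collapses both inequalities to the single bound $|\cos\theta|\le \cosh(\ell/2)/(\cosh(\ell/2)+1)$, hence $\sin^2\theta\ge(2\cosh(\ell/2)+1)/(\cosh(\ell/2)+1)^2$. Using the half-angle identity $\cosh(\ell/2)+1=2\cosh^2(\ell/4)$, the claimed bound $\sin^2\theta>1/(4\cosh^2(\ell/4))$ is equivalent to $2\bigl(2\cosh(\ell/2)+1\bigr)>\cosh(\ell/2)+1$, i.e. to $3\cosh(\ell/2)+1>0$, which always holds; taking square roots gives $\sin\angle(\gamma,\delta)>\tfrac12\cosh(\ell/4)^{-1}$.

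The only genuine (and still minor) obstacle is getting the trace computation right — the rotation-angle convention in $\PSL_2(\R)$ and the check that the traces land in the hyperbolic range $(2,\infty)$ so that the length/trace dictionary applies; everything afterwards is elementary algebra plus the half-angle identity. Alternatively one can avoid matrices altogether by invoking the standard formula $\cosh\tfrac12\ell(gh^{\pm1})=\cosh^2(\ell/2)\pm\sinh^2(\ell/2)\cos\theta$ for the translation length of a product of two hyperbolic isometries of equal translation length $\ell$ whose axes meet at angle $\theta$, and then arguing exactly as above.
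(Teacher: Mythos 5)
Your proof is correct, and it follows a genuinely different route from the paper. The paper argues locally: it picks the midpoint $q$ of one arc of $\delta$, drops the shortest arc $h$ from $q$ to $\gamma$ in the appropriate homotopy class, observes that the foot $b$ of the resulting right triangle has $b\le\ell/4$ (else one could shortcut $\delta$), deduces $h>\ell/4$ from $\cosh(\ell/2)=\cosh b\cosh h$, and then the hyperbolic sine rule $\sin\theta/\sinh h=1/\sinh(\ell/2)$ plus $\sinh(\ell/2)=2\sinh(\ell/4)\cosh(\ell/4)$ gives the stated bound. You instead work globally in the deck group: you exploit the fact that the products $gh$ and $gh^{-1}$ are again non-trivial (hence hyperbolic) elements of a cocompact torsion-free $\Gamma$, so their translation lengths are bounded below by $\sys(S)=\ell$, and the explicit trace identity $\mathrm{tr}(gh^{\pm1})=2\cosh^2(\ell/2)\pm2\sinh^2(\ell/2)\cos\theta$ then squeezes $\cos\theta$ on both sides. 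Both arguments are sound; I verified your trace computation (with the half-angle convention $R=R_{\theta/2}$) and the ensuing algebra. A side benefit of your route is that it actually proves the slightly stronger estimate $\sin^2\theta\ge(2\cosh(\ell/2)+1)/(\cosh(\ell/2)+1)^2$, which is asymptotically a factor of $2$ better in $\sin\theta$ than the lemma's $\tfrac12\cosh(\ell/4)^{-1}$; conversely, the paper's local argument is self-contained at the level of triangle trigonometry and avoids any appeal to trace/length dictionaries or to the classification of products of isometries. One small presentational point: the appeal to the fact that $gh$ and $gh^{-1}$ are non-trivial is immediate (their axes are distinct so $h\ne g^{\mp1}$), and their hyperbolicity follows from torsion-freeness and cocompactness before you even compute the trace, so the trace check is a confirmation rather than a needed step; it would be cleaner to say so.
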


\begin{proof}
Let $p$ be the intersection point of the two curves and consider the point $q$ of $\delta$ that is at distance exactly $\ell/2$ away from $p$. Fix one of the two paths of $\delta$, say $\delta_1$, between $q$ and $\gamma$. Among all paths freely homotopic to $\gamma_1$ with one fixed endpoint at $q$ and the other on any point of $\gamma$, there is one of minimal length which we shall denote $h$. Denote $q'$ the point of intersection of $h$ and $\gamma$. We now have a right triangle $p,q,q'$ with hypothenuse $H$ is of length $\sfrac{\ell}{2}$, the side $h$, and the basis $b$. Observe that using the other arc of $\delta$, say $\delta_2$, one obtains a symmetric situation with an isometric right angle triangle where the two triangles are linked via a rotation of angle $\pi$ around $q$ (see figure \ref{fig:anglepants}). 

\begin{figure}[h]
\leavevmode \SetLabels
\L(.13*.466) $\gamma$\\
\L(.19*.33) $p$\\
\L(.325*.49) $q$\\
\L(.26*.73) $\delta$\\
\L(.52*.69) $p$\\
\L(.67*.58) $q$\\
\L(.534*.865) $q'$\\
\L(.805*1.04) $p$\\
\L(.49*.9) $\gamma$\\
\L(.88*.937) $\gamma$\\
\endSetLabels
\begin{center}
\AffixLabels{\centerline{\epsfig{file =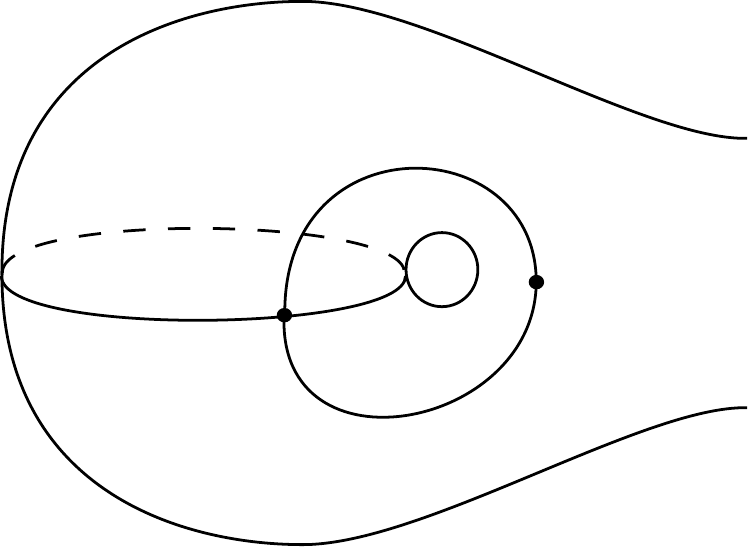,height=3.5cm,angle=0}\hspace{1.0cm} \epsfig{file =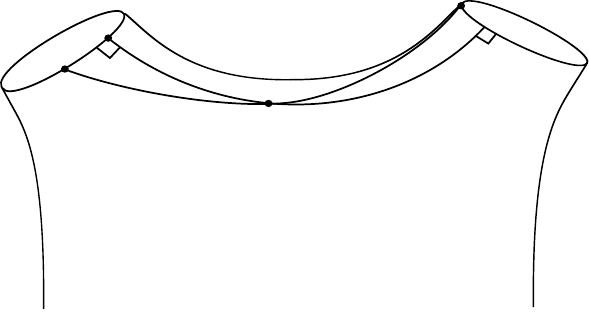,height=3.5cm,angle=0}}}
\vspace{-18pt}
\end{center}
\caption{Two intersecting systoles} \label{fig:anglepants}
\end{figure}
As the two triangles are isometric, we can concentrate on the first one. It is not difficult to see that $b$ is of length at most $\ell/4$, otherwise one can construct a shorter nontrivial path than $\delta$. 

Consider the angle $\theta=\angle(\gamma,\delta)$. By the sine formula for hyperbolic triangles, it satisfies
$$
\frac{\sin(\theta)}{\sinh(h)} = \frac{\sin(\pi/2)}{\sinh(\ell/2)}.
$$
Now because the basis is at most $\ell/4$, $h$ is strictly greater than $\ell/4$ (otherwise the same occurs in the other triangle, and we obtain $\ell(\delta)<\ell$). The inequality 
$$
\sin \angle(\gamma,\delta) > \frac{\sinh(\ell/4)}{\sinh(\ell/2)} 
$$
and one concludes by using the $\sinh$ formula for a double angle.
\end{proof}

\begin{notation} We denote $\theta_\ell := \arcsin \frac{1}{2 \cosh(\ell/4)}$.
\end{notation}

For large $g$, the behavior is roughly $e^{\ell/4}$. In particular, using the $2 \log g$ upper bound on length, this implies a $\frac{1}{\sqrt{g}}$ lower bound on the angle between systoles. Thus a collection of systoles that intersect in a single point cannot have cardinality greater than roughly $\sqrt{g}$. This is very different from the case of variable curvature where one can construct surfaces with $\sim g$ systoles that intersect in a single point, see Remark \ref{rem:gint}.

Corollary \ref{cor:close} ensures that two systoles $\gamma,\delta$ with points $p_\gamma \in \gamma$, $p_\delta \in \delta$ that satisfy $d(p_\gamma,p_\delta)< d(\ell)$ must intersect somewhere on the surface. This next lemma gives a bound on how far the intersection point can be from $p_\gamma$ and $p_\delta$. 

\begin{lemma}\label{lem:dist}
Let $\gamma,\delta$ be systoles of length $\ell$ which cross a disk of radius $r(\ell)$ with center $p$. Then the intersection point $q$ between $\gamma$ and $\delta$ satisfies
$$
d(p,q) < \arcsinh\left(2 \coth\frac{\ell}{4}\right).
$$
\end{lemma}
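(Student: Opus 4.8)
The plan is to derive the bound from the angular separation of intersecting systoles: since $\gamma$ and $\delta$ both have to come within $r(\ell)$ of $p$, if $q$ were far from $p$ then each of $\gamma,\delta$ would have to leave $q$ in a direction almost parallel to the geodesic $[p,q]$, making the angle between $\gamma$ and $\delta$ at $q$ too small for Lemma~\ref{lem:angle}. If $q\in D(p,r(\ell))$ we are done, because $r(\ell)=\arcsinh\!\big(\tfrac1{2\sinh(\ell/4)}\big)<\arcsinh\!\big(2\coth(\ell/4)\big)$; so assume $D:=d(p,q)\ge r(\ell)$. Write $\theta=\angle(\gamma,\delta)$ for the angle at $q$, and let $\phi_\gamma,\phi_\delta\in[0,\tfrac\pi2]$ be the angles the geodesics $\gamma$ and $\delta$ make with the geodesic segment $[p,q]$ at $q$.

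The key estimate is for $\phi_\gamma$. Dropping the perpendicular from $p$ onto $\gamma$ and applying the right-angled sine rule to the hyperbolic triangle with vertices $p$, $q$ and the foot of that perpendicular gives $\sinh\!\big(d(p,\gamma)\big)=\sinh(D)\,\sin\phi_\gamma$. Since $\gamma$ enters $D(p,r(\ell))$ we have $d(p,\gamma)<r(\ell)$, hence $\sin\phi_\gamma<\sinh r(\ell)/\sinh D$, and symmetrically $\sin\phi_\delta<\sinh r(\ell)/\sinh D$.

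To finish, note $\theta\le\phi_\gamma+\phi_\delta$ (the angle between two lines through $q$ obeys the triangle inequality), so $\sin\theta\le\sin\phi_\gamma+\sin\phi_\delta$; combining this with Lemma~\ref{lem:angle} and $\sinh r(\ell)=\tfrac1{2\sinh(\ell/4)}$,
$$
\frac1{2\cosh(\ell/4)}<\sin\theta\le\sin\phi_\gamma+\sin\phi_\delta<\frac{2\sinh r(\ell)}{\sinh D}=\frac1{\sinh(\ell/4)\,\sinh D}.
$$
This rearranges to $\sinh D<2\cosh(\ell/4)/\sinh(\ell/4)=2\coth(\ell/4)$, i.e. $d(p,q)<\arcsinh\!\big(2\coth(\ell/4)\big)$.

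The one point needing care — and the step I expect to be the main obstacle — is that the identity $\sinh(d(p,\gamma))=\sinh(D)\sin\phi_\gamma$ and its analogue for $\delta$ really live in $\Hyp^2$: they refer to a common lift $\tilde q$ of $q$, the lifts $\tilde\gamma,\tilde\delta$ of $\gamma,\delta$ through $\tilde q$, and a \emph{single} lift $\tilde p$ of $p$ lying within $r(\ell)$ of both $\tilde\gamma$ and $\tilde\delta$. I would produce such a $\tilde p$ by working with the nearest points $p_\gamma\in\gamma$ and $p_\delta\in\delta$ to $p$, the perpendiculars $[p,p_\gamma]\perp\gamma$ and $[p,p_\delta]\perp\delta$ (of length $<r(\ell)$), and the arcs of $\gamma,\delta$ joining $q$ to $p_\gamma,p_\delta$; since two distinct lifts of $p$ are at distance at least the systole $\ell$ (a geodesic loop based at $p$ has length $\ge\ell$) whereas the two candidate lifts can be chosen within $\tfrac\ell2+r(\ell)$ of $\tilde q$, a short argument — split according to the size of $\ell$, with the small-$\ell$ range handled outright by the crude estimate $d(p,q)\le r(\ell)+\tfrac\ell2\le\arcsinh(2\coth(\ell/4))$ — forces the coincidence one needs.
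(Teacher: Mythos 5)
Your core computation is correct and reaches the same constant $2\coth(\ell/4)$ by a slightly different route from the paper. You drop the perpendicular from $p$ to each of $\gamma,\delta$, use the right--triangle relation $\sinh d(p,\gamma)=\sinh D\,\sin\phi_\gamma$, and close with $\sin\theta\le\sin\phi_\gamma+\sin\phi_\delta$. The paper instead works with a point $p_\gamma\in\gamma$ inside the disk (not the foot of the perpendicular), applies the sine rule in the oblique triangle $p,q,p_\gamma$, bounds the angle at $p_\gamma$ by $\sin\theta'\le 1$, and uses the half--angle estimate $\sin(\theta_\ell/2)>1/(4\cosh(\ell/4))$. Your version keeps both triangles in play and thereby avoids the half--angle reduction entirely, which is a mild simplification. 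Two small things you should make explicit: take $\theta$ to be the \emph{acute} angle of intersection (Lemma~\ref{lem:angle} only controls $\sin\theta$), and justify that $\theta\le\phi_\gamma+\phi_\delta$ with all angles in $[0,\pi/2]$ really gives $\sin\theta\le\sin\phi_\gamma+\sin\phi_\delta$ (true, but it needs the one-line case split on whether $\phi_\gamma+\phi_\delta\le\pi/2$, since otherwise $\sin$ is no longer monotone at $\phi_\gamma+\phi_\delta$; in the other case $\sin\phi_\gamma+\sin\phi_\delta\ge 1\ge\sin\theta$).

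The lifting concern you raise is legitimate, and to be fair the paper's own proof quietly assumes it too when it treats $p,q,p_\gamma$ as a hyperbolic triangle. However, the resolution you sketch does not actually close the gap. Two distinct lifts of $p$ are only forced apart by at least $\ell$, while your two candidate lifts $\tilde p_1,\tilde p_2$ are each at most $\ell/2+r(\ell)$ from $\tilde q$, hence at most $\ell+2r(\ell)>\ell$ apart; nothing forces them to coincide. And the crude fallback $d(p,q)\le r(\ell)+\ell/2\le\arcsinh(2\coth(\ell/4))$ fails already for moderately small $\ell$ (e.g.\ at $\ell=3$ one has $r(3)+3/2\approx 2.08$ but $\arcsinh(2\coth(3/4))\approx 1.86$), so the ``split according to the size of $\ell$'' does not cover the complementary range. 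As written this step is a gap. Since the paper itself does not address the point, I would regard your write-up as no less rigorous than the source on this score --- but you flagged the step as the main obstacle, and the specific argument you propose for it does not go through.
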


\begin{proof}
Consider points $p_\gamma \in \gamma$, $p_\delta \in \delta$ which lie in the disk centered in $p$. Consider the two angles $\angle(q,p,p_\gamma)$ and $\angle(q,p,p_\delta)$. By Lemma \ref{lem:angle}, one of the two angles must be greater than $\theta_\ell\over 2$. Without loss of generality, let us suppose that this angle is $\theta=\angle(q,p,p_\gamma)$. We note that via Lemma \ref{lem:angle} again we obtain

$$
\sin(\theta) \geq \sin({\theta_\ell / 2}) = \frac{1}{2 \cos (\theta_\ell / 2)} \sin(\theta_\ell) >  \frac{1}{4 \cosh(\ell/4)}.
$$

We now concentrate our attention to the triangle $p,q,p_\gamma$. Denote by $\theta'$ the angle opposite the side $\overline{pq}$ (see figure \ref{fig:disktriangles}). We have the following identity:
$$
\frac{\sin(\theta')}{\sinh d(p,q)}= \frac{\sin(\theta)}{\sinh d(p,p_\gamma)}.
$$

\begin{figure}[h]
\leavevmode \SetLabels
\L(.218*.49) $q$\\
\L(.593*.693) $\theta'$\\
\L(.402*.537) $\theta$\\
\L(.63*.5) $p$\\
\L(.645*.79) $p_\gamma$\\
\L(.56*.27) $p_\delta$\\
\endSetLabels
\begin{center}
\AffixLabels{\centerline{\epsfig{file =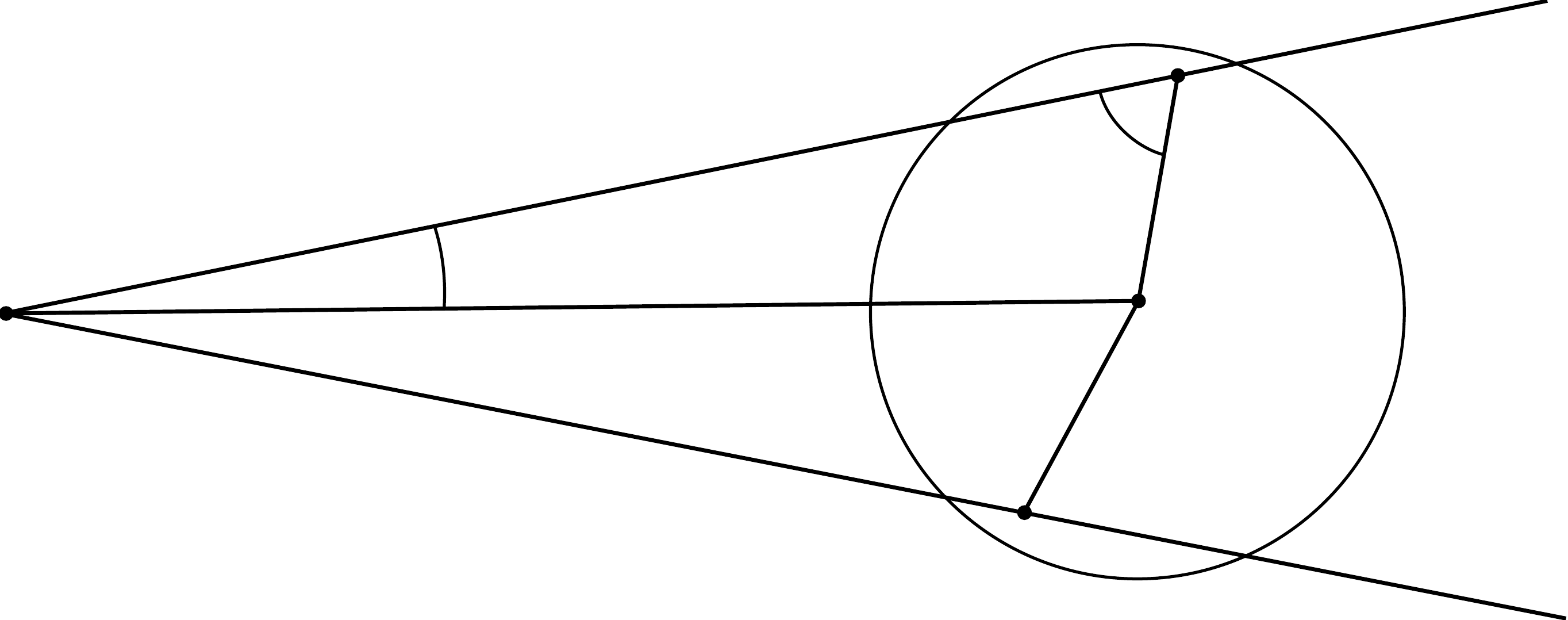,width=8.0cm,angle=0}}}
\vspace{-18pt}
\end{center}
\caption{Computing the distance to an intersection point} \label{fig:disktriangles}
\end{figure}

Via Lemma \ref{lem:scl} and what precedes we have 
$$
\sinh d(p,q) < \frac{\sin(\theta')}{\sin(\theta)} \frac{1}{2\,\sinh(\ell/4)} < 2 \coth \frac{\ell}{4}$$
which concludes the proof.
\end{proof}

\begin{notation} For future reference, we fix the following notation:
$$
R(\ell):=\arcsinh\left(2 \coth\frac{\ell}{4}\right).
$$
\end{notation}

\begin{remark}\label{rem:radius}
Observe that via the same argument, for two systoles that intersect in a point $q$ and both simultaneously pass through a disk of radius $\rho$ and center $p$, the following holds:
$$
\sinh d(p,q) \leq \frac{\sinh \rho}{\sin(\theta_\ell/2)} <  \frac{\sinh \rho}{4 \cosh(\ell/4)}.
$$
\end{remark}

\subsection{Proof of the upper bound} We can now proceed to the proof of our main upper bound. We begin by giving a more precise version of Theorem \ref{thm:mainhyperbolicintro}.

\begin{theorem}\label{thm:mainhyperbolic}
Let $S$ be a hyperbolic surface of genus $g$ with systole bounded by $\ell$. Then
$$
\numS \leq C_\ell \,(g-1)
$$
where $C_\ell$ is a constant that depends on $\ell$ which can be taken to be
$$
C_\ell = 100\,  \frac{e^{\,\sfrac{\ell}{2}}}{\ell}.
$$
\end{theorem}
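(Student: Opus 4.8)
The plan is to count systoles by a covering argument: fix an efficient set of points in $S$ so that every systole passes close to at least one of them, then bound how many systoles can pass near a single point. For the first part, recall that a disk of radius $r(\ell)$ in $S$ is embedded (this is essentially the content of Lemma \ref{lem:scl}, which says disjoint systoles stay $2r(\ell)$ apart, but more simply it follows because $r(\ell)<w(\ell)/2$ and $S$ has injectivity radius $\ell/2$). Take a maximal set of points $p_1,\dots,p_N$ in $S$ that are pairwise at distance at least $2r(\ell)$; by maximality every point of $S$ — in particular every point of every systole — lies within $2r(\ell)$ of some $p_i$. The disks of radius $r(\ell)$ about the $p_i$ are pairwise disjoint, each has area $2\pi(\cosh r(\ell)-1)$, and the total area of $S$ is $4\pi(g-1)$, so $N \le \dfrac{2(g-1)}{\cosh r(\ell)-1}$. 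One then computes $\cosh r(\ell)-1$ in terms of $\ell$ using $\sinh r(\ell) = \tfrac{1}{2\sinh(\ell/4)}$; for the asymptotics this quantity behaves like $\sim \tfrac14 e^{-\ell/2}$, which is where the $e^{\ell/2}$ in $C_\ell$ comes from.

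For the local count: I claim only boundedly many systoles can come within $2r(\ell)$ of a fixed point $p$. Suppose systoles $\gamma_1,\dots,\gamma_k$ all meet the disk $D$ of radius $2r(\ell)$ about $p$. By Corollary \ref{cor:close} (applied with a slightly enlarged radius, or by noting $2r(\ell)$-close systoles must intersect — this is exactly the regime handled by Lemma \ref{lem:dist} and Remark \ref{rem:radius}), any two of them essentially intersect, and by Lemma \ref{lem:dist} (with $\rho = 2r(\ell)$ in the version of Remark \ref{rem:radius}) each pairwise intersection point lies within distance $R(\ell)$ — actually within $\arcsinh\!\big(\tfrac{\sinh 2r(\ell)}{4\cosh(\ell/4)}\big)$, a bounded-in-the-right-way quantity — of $p$. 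So after lifting to $\Hyp$, all the $\gamma_j$ pass through a fixed small disk $D'$ about $\tilde p$, and pairwise they cross at angle at least $\theta_\ell$ by Lemma \ref{lem:angle}. A family of geodesic segments through a disk of radius $\rho'$ with pairwise crossing angle $\ge \theta_\ell$ can have at most $\lfloor \pi/\theta_\ell\rfloor + O(1)$ members — one argues by looking at the directions of the $\gamma_j$ at a common reference point: two geodesics through $D'$ whose directions (measured as unoriented lines) are within $\theta_\ell$ would either be the same line or would have to cross inside an even smaller disk, contradicting the angle bound, so the directions are $\theta_\ell$-separated in $[0,\pi)$, giving $k \le \pi/\theta_\ell + O(1)$. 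Since $\theta_\ell = \arcsin\tfrac{1}{2\cosh(\ell/4)}$, we have $1/\theta_\ell \sim 2e^{\ell/4}$, so $k = O(e^{\ell/4})$.

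Combining, $\numS \le N\cdot k = O\!\big(\tfrac{g-1}{\cosh r(\ell)-1}\cdot e^{\ell/4}\big) = O\big(e^{\ell/2}\cdot e^{\ell/4}\cdot (g-1)\big)$, which gives a bound of order $e^{3\ell/4}(g-1)$ — worse than the claimed $\tfrac{e^{\ell/2}}{\ell}(g-1)$. To get the sharper constant I would not pick a single scale: instead of covering $S$ by disks of radius $r(\ell)$ and then paying an extra $e^{\ell/4}$ factor for the angular count, I would cover by disks of radius $\asymp \ell$ (the scale where a systole, having length $\ell$, occupies a definite fraction of the disk), so that each systole passing through such a disk leaves a chord of length $\gtrsim \ell$; then a length/area packing argument inside that single disk — the disk has area $\sim e^{\ell}$, each systole-chord has an embedded collar-neighborhood of area $\gtrsim \ell$ (width $\sim w(\ell)\sim e^{-\ell/2}$ times length $\sim \ell$), wait that gives area $\sim \ell e^{-\ell/2}$ — hmm, more carefully the chords through a fixed disk of radius $\asymp\ell$ pairwise cross, so one counts them via the angle bound again but now the number of disks is only $\sim (g-1)/e^{\ell}$, yielding $\numS \lesssim \tfrac{g-1}{e^\ell}\cdot \tfrac{e^{\ell}}{\ell} = \tfrac{g-1}{\ell}$ times a constant. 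The factor $e^{\ell/2}$ then re-enters because the disks of radius $\asymp \ell$ are not embedded — only radius-$r(\ell)$ disks are — so one loses an $e^{\ell/2}$ overlap factor, recovering $C_\ell \sim \tfrac{e^{\ell/2}}{\ell}$. The main obstacle, and the step requiring real care, is exactly this balancing of the covering scale against the local (angular/length) count so that the two losses multiply to $e^{\ell/2}/\ell$ rather than $e^{3\ell/4}$; getting the explicit constant $100$ is then a matter of tracking the trigonometric estimates in Lemmas \ref{lem:scl}, \ref{lem:angle}, and \ref{lem:dist} with no slack thrown away.
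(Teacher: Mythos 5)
The first half of your proposal tracks the paper's strategy: cover $S$ at scale $r(\ell)$, and use the angle bound of Lemma~\ref{lem:angle} together with the intersection-point confinement of Lemma~\ref{lem:dist} to show that only $O(e^{\ell/4})$ systoles can cross any single covering ball. Up to there your $N$ and $k$ match the paper's $F(S)$ and $G(S)$ in order of growth. The genuine gap is in how you combine them. You write $\numS \leq N\cdot k$ and correctly observe this gives $\sim e^{3\ell/4}(g-1)$, which is too weak by a factor $\sim \ell\, e^{\ell/4}$. The paper's fix is not, as you try next, to change the covering scale. It is a double-counting improvement at the \emph{same} scale: a systole has length $\ell$ while the covering balls have radius $r(\ell)\approx e^{-\ell/4}$, so each systole crosses at least $H(S)\gtrsim \ell/r(\ell)\asymp \ell\, e^{\ell/4}$ of them. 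Counting incidences (systole, ball crossed) two ways gives $\numS\cdot H(S)\leq F(S)\cdot G(S)$, hence $\numS\leq F(S)G(S)/H(S)$ --- which is precisely your $N\cdot k$ divided by the $\ell\, e^{\ell/4}$ you were missing. That single observation is the crux of the proof, and it is absent from your argument.

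Your alternative --- enlarging the cover to disks of radius $\asymp\ell$ --- is not what the paper does and does not work as written. The disk count is off (a packing of half-radius disks gives $\lesssim(g-1)e^{-\ell/2}$ covering disks, not $(g-1)e^{-\ell}$), the chord/collar area estimate you try goes the wrong way as you yourself note mid-sentence, and the ``lose an $e^{\ell/2}$ overlap factor'' step for non-embedded disks is a heuristic, not an estimate. None of this is needed: keeping the $r(\ell)$-cover and dividing out the per-systole multiplicity $H(S)$ gives the stated $C_\ell\sim e^{\ell/2}/\ell$ directly. Two smaller points: (i) your local count via direction-separation $k\leq \pi/\theta_\ell + O(1)$ is the right order but not the paper's argument (which divides the boundary length of an enlarged disk $D_2$ by a lower bound on the spacing of intersection points); to make your version rigorous you still need Lemma~\ref{lem:dist} to rule out nearly-parallel but distinct lines both crossing the small disk, which you gesture at but do not carry through. (ii) The inequality you quote from Remark~\ref{rem:radius} carries a sign slip (from $\sin(\theta_\ell/2)>\tfrac{1}{4\cosh(\ell/4)}$ one gets $1/\sin(\theta_\ell/2)<4\cosh(\ell/4)$, so the bound should read $\sinh d(p,q)<4\sinh\rho\cosh(\ell/4)$); this does not change the order of growth but would matter if you wanted the explicit constant $100$.
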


\begin{remark}
The constant $100$ in front can easily be improved, but it is the order of growth we are interested in.
\end{remark}

\begin{proof}
If $L\leq 2 \arcsinh 1$ then, by the collar lemma, systoles are disjoint, and thus, as there are at most $3g-3$ disjoint simple closed geodesics on a genus $g$ surface, we obtain
$$
\numS \leq 3g-3.
$$
We can now concentrate on the case when $\ell \geq 2 \arcsinh 1$.

The basic strategy will be the following. We begin by covering the surface $S$ by balls of radius $r(\ell)$ (where $r(\ell)$ is given by Lemma \ref{lem:scl}). The first step will be to estimate $F(S)$, an upper bound on the minimum number of these balls required to cover $S$. We'll then find an upper bound $G(S)$ on the number of systoles that can cross such a ball. Finally, if we denote $H(S)$ the minimum number of covering balls that a systole of $S$ must cross, we have 
$$
\numS \leq \frac{F(S)\, G(S)}{H(S)}.
$$
Let us now concentrate on finding bounds for these quantities in function of $\ell$ and $g$.\\

\noindent{\underline{The number of balls required to cover $S$}}

As usual in these type of estimates, we use 

\begin{center}
$
F(S)=\{\text{Number of balls of radius $r(\ell)$ needed to cover $S$}\}$

$\leq$

$ \{\text{Max number of balls of radius $\sfrac{r(\ell)}{2}$ that embed and are pairwise disjoint}\}.$
\end{center}
Now as
$$
\text{Area}(D_{\sfrac{r(\ell)}{2}}) = 2\pi(\cosh(\sfrac{r(\ell)}{2})-1) = 2\pi\left( \cosh\left(\frac{\arcsinh\left(\frac{1}{2\sinh(\ell/4)}\right)}{2}\right) -1 \right)
$$
we deduce a bound for $F(S)$ that depends only on $g$ (coming from the area of $S$) and $\ell$:
$$
F(S) \leq \frac{\text{Area}(S)}{\text{Area}(D_{\sfrac{r(\ell)}{2}})}.
$$
To get an idea of the order of growth of this bound, observe that
$$
\text{Area}(D_{\sfrac{r(\ell)}{2}}) > \frac{\pi}{4} e^{-\sfrac{\ell}{2}}
$$ 
and it follows that
$$
F(S) <16 (g-1) \, e^{\sfrac{\ell}{2}}.
$$
\vspace{0.3cm}

\noindent{\underline{The number of systoles intersecting each ball}}

We now proceed to find an upper bound on the number of systoles that can intersect a ball of radius $r(\ell)$. Consider a disk $D_0$ on $S$ of radius $r(\ell)$. Any two given systoles that cross $D_0$ must intersect, and via Lemma \ref{lem:dist} we know that their intersection point $q$ lies within $R(\ell)$ of the center $p$ of $D_0$. We shall now reason in the universal cover, considering $D_0$ and the disk $D_1$ of center $p$ and radius $R(\ell)$. The geometric problem we are interested in is as follows.

{\it How many (hyperbolic) lines, any two of which pairwise intersect in an angle of at least $\theta_\ell$, can intersect $D_0$?}

For this consider the disk $D_2$, also of center $p$, but of radius $R(\ell)+R'$ for a given $R'>0$. Consider geodesics $\gamma_1$ and $\gamma_2$ which pass through $D_0$ and intersect in $q \in D_1$. From $q$, we consider one of the four angular sectors of angle $\theta>0$ and the two rays of $\gamma_1$ and $\gamma_2$ which bound the sector. We set $q_1,q_2$ to be the intersection points of the rays of $\gamma_1$ and $\gamma_2$ with the boundary of $D_2$ as in figure \ref{fig:3disks}. Note that $d(q,q_1)$ and $d(q,q_2) > R'$ and that $\theta> \theta_0$.

\begin{figure}[h]
\leavevmode \SetLabels
\L(.633*.166) $p$\\
\L(.53*.28) $q$\\
\L(.65*.30) $D_0$\\
\L(.65*.67) $D_1$\\
\L(.65*.91) $D_2$\\
\L(.33*.5) $q_1$\\
\L(.412*.813) $q_2$\\
\endSetLabels
\begin{center}
\AffixLabels{\centerline{\epsfig{file =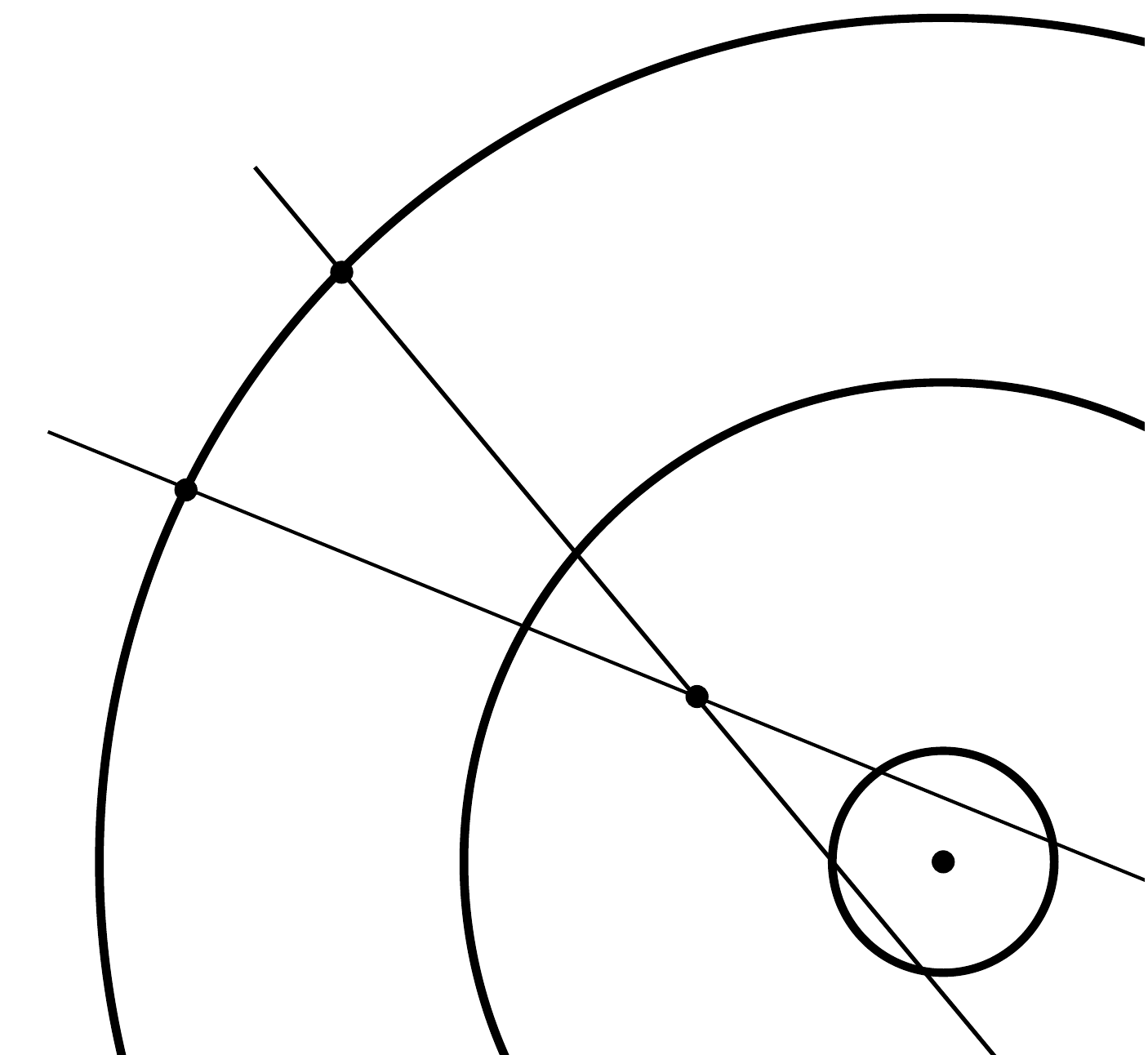,width=6.0cm,angle=0}}}
\vspace{-18pt}
\end{center}
\caption{The disks $D_0, D_1$ and $D_2$} \label{fig:3disks}
\end{figure}

Considering the triangle $q,q_1,q_2$, observe the following: the distance $d(q_1,q_2)$ is strictly greater than the distance one would compute if 
$$d(q,q_1)=d(q,q_2) = R' {\text{ and }} \theta=\theta_0.$$
Via hyperbolic trigonometry in the $(q,q_1,q_2)$ triangle:
$$
d(q_1,q_2) > 2 \arcsinh\left( \sinh(R') \sin(\theta_\ell)\right).
$$
We now consider a maximal set of lines $\gamma_1,\hdots,\gamma_k$ that cross $D$. The lines divide $\partial D_2$ into $2k$ arcs. As such, the cardinality of the set of lines is bounded by the length of $\partial D_2$ divided by twice the minimal distance between the intersection points of the lines with $D_2$. By the above estimates, we have
$$
G(S) < \frac{ \pi \sinh(R(\ell)+R')}{2 \arcsinh(\sinh(R') \sinh(\theta_\ell))}.
$$
Setting $R' := \arcsinh(1)$ this becomes
$$
G(S) < \frac{\pi}{2} \frac{\sinh(R(\ell) +\arcsinh(1))}{\arcsin \frac{1}{2 \cosh(\ell/4)}}.
$$
Note that $R(\ell)$ remains bounded as $\ell$ increases so we get an upper bound on $G(S)$ which has order of growth $\sim e^{\sfrac{\ell}{4}}$.

\vspace{0.5cm}

\noindent{\underline{The number of balls each systole crosses}}

\noindent The last estimate we need is a lower bound on the number of balls $H(S)$ in our covering of $S$ that each systole necessarily crosses. The estimate we'll use is straightforward: to cover a geodesic segment of length $\ell$ with balls of radius $r(\ell)$ one requires at least $2\, \sfrac{\ell}{r(\ell)}$ balls. Thus
$$
H(S)\geq \frac{2\, \ell}{r(\ell)} =  \frac{2\, \ell}{\arcsinh\left(\frac{1}{2\sinh(\ell/4)}\right)}.
$$
Here the order of growth is roughly $\ell \,e^{\,\sfrac{\ell}{4}}$.

We can now conclude by using our estimates for $F(S),G(S)$ and $H(S)$. By our above estimates, the order of growth of the bound is 
$$\sim \text{Area}(S) \frac{e^{\,\sfrac{\ell}{2}}}{\ell}.
$$
By a few elementary considerations, for $\ell \geq 2 \arcsinh 1$, we have the following effective inequality
$$
\numS < 100 \,(g-1) \frac{e^{\,\sfrac{\ell}{2}}}{\ell}
$$
which concludes the proof.
\end{proof}

The proofs of Corollaries \ref{cor:subquad} and \ref{cor:linear} in the introduction follow from elementary estimates.

\section{Non-hyperbolic surfaces with many systoles}
In this section, we construct non-hyperbolic surfaces with many systoles. Recall the statement from the introduction.
\begin{theorem}\label{thm:completegraph}
There exist surfaces of genus $g>0$ with a number of systoles of order of growth at least $g^{\sfrac{3}{2}}$.
\end{theorem}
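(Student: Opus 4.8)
\emph{Strategy and construction.} The surfaces are thickenings of complete graphs. Recall that $K_n$, the complete graph on $n$ vertices, has $\binom{n}{2}$ edges, girth $3$, and exactly $\binom{n}{3}$ embedded triangles. For each large $n$ the plan is to build a closed surface $S_n$ as follows: fix an embedding of $K_n$ in a closed orientable surface, take a regular neighbourhood $N$ of its image, and give $N$ a fat-graph metric in which each edge of $K_n$ is a flat rectangle of length $1$ and width $\eps\ll1$ and each vertex is a metrically tiny hub (with a product collar on $\partial N$); then let $S_n$ be the double of $N$ across its boundary. (Capping the boundary circles with disks also works, but doubling keeps all triangles essential and is cleaner.) An Euler-characteristic count gives $\chi(S_n)=2\chi(K_n)=2(n-\binom{n}{2})$, so $g_n:=g(S_n)=\binom{n-1}{2}=\Theta(n^2)$. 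The metric is piecewise flat with cone points, in particular non-hyperbolic, and its area is $\Theta(g_n)$. The goal is then to show that the systole of $S_n$ equals $3$ and that the $\binom{n}{3}$ triangles of $K_n$ are systoles; since $\binom{n}{3}\sim\tfrac16 n^3$ and $g_n^{\sfrac{3}{2}}=\Theta(n^3)$, this yields $\sim g^{\sfrac{3}{2}}$ systoles.

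\emph{The systole equals $3$.} The upper bound is immediate: a triangle of $K_n$ crosses three edges, hence has length $3$ once the rectangles are normalised so that hubs contribute nothing. For the lower bound, let $\gamma$ be an essential closed curve on $S_n$; homotope it into the fat graph $N$, which deformation retracts onto $K_n$, so that the homotoped curve runs along a closed reduced edge-path, which, being essential, uses at least $\mathrm{girth}(K_n)=3$ edges, whence $\ell(\gamma)\ge 3-O(\eps)$. The delicate point is to upgrade this to $\ell(\gamma)\ge 3$, i.e.\ to rule out essential curves that beat their edge-length by shortcutting through a hub or winding inside a thin rectangle. The clean way out is to collapse each hub to a point, so that $S_n$ carries a genuine non-positively curved (flat-with-cone-points, cone angles $>2\pi$) metric: then every free homotopy class of essential loops has a unique geodesic representative, and a short argument in the $\mathrm{CAT}(0)$ universal cover shows that a shortest essential geodesic meets the $1$-skeleton so as to read off a shortest cycle of $K_n$, of length $3$. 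In particular the systoles of $S_n$ are exactly the tightened triangles of $K_n$, which are simple (as systoles always are), each of length $3$.

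\emph{Counting and conclusion.} The $\binom{n}{3}$ triangles of $K_n$ are pairwise non-freely-homotopic on $S_n$: $N$ is incompressible in its double, so $H_1(K_n)=H_1(N)$ injects into $H_1(S_n)$, and distinct triangles are distinct non-zero $1$-cycles of $K_n$, hence have distinct images in $H_1(S_n)$ and cannot be conjugate in $\pi_1(S_n)$. Therefore $S_n$ has at least $\binom{n}{3}\sim\tfrac16 n^3$ systoles, while $g_n=\Theta(n^2)$, so the number of systoles of $S_n$ is at least $c\,g_n^{\sfrac{3}{2}}$ for a fixed $c>0$ and all large $n$; this is the claimed order of growth (along the sequence $g_n$, which suffices). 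One may also note that the $\binom{n-1}{2}\sim g_n$ triangles through a fixed vertex all pass through the corresponding cone point, which is the mechanism behind Remark~\ref{rem:gint}, and that the constant systole length against area $\Theta(g_n)$ is the behaviour announced after Corollary~\ref{cor:linear}.

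\emph{Main obstacle.} The first and third steps are essentially bookkeeping; the real content is the systole \emph{lower} bound — excluding essential curves of length $<3$ that exploit the hubs or the thin bands. Arranging the metric to be non-positively curved is exactly what reduces this to the transparent combinatorial fact that $K_n$ has girth $3$. One should also confirm that a shortest geodesic in the free homotopy class of a triangle has length exactly $3$ rather than less — equivalently, that it does not shrink below the systole — which is immediate once the systole is known to be $3$, but is worth stating so that each triangle is genuinely counted as a systole.
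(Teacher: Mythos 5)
Your overall strategy — embed $K_n$, thicken it, arrange the metric so that the triangles of $K_n$ realize the systole, then count $\Theta(n^3)$ of them against genus $\Theta(n^2)$ — is exactly the paper's idea, and the doubling trick (plus the $H_1(N)\hookrightarrow H_1(\mathrm{Double}(N))$ observation, which is correct by Mayer--Vietoris since $j_*$ and $i_*$ have the same kernel) is a clean alternative to the paper's intersection-number argument for distinguishing the triangles. However, the geometric core of your proof has two real gaps.

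First, the assertion that ``collapsing each hub to a point'' yields a flat-with-cone-points metric of non-positive curvature, and that from this the systole is exactly $3$, is not established and in fact cannot be true for \emph{all} $\binom{n}{3}$ triangles. At a vertex $v$ of degree $n-1$ with a cyclic ribbon order of half-edges, a triangle enters and leaves through two of them. If the two are \emph{cyclically adjacent}, the edge-path turns with angle $0$ on one side at the (collapsed) vertex, so it is not a local geodesic in the cone metric; the geodesic representative in that free homotopy class slides off the vertex and is strictly shorter than $3$. So either such classes drag the systole below $3$, or they are not systoles — in either case you cannot claim all $\binom{n}{3}$ triangles as systoles of length $3$. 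This is precisely why the paper restricts attention to the triangles that at some vertex \emph{separate} the remaining half-edges into two nonempty sets (there are $\sim n^3/6$ of these as well), and separately argues that the non-separating triangles project to trivial curves in its (minimal-genus, capped) model. Your CAT$(0)$ reduction does not dispose of this; it only converts the question into a statement about cone angles at which exactly the separating condition reappears. (Also, note that non-positive curvature gives a geodesic representative in every free homotopy class, not a \emph{unique} one — flat annuli of parallel geodesics occur, harmlessly but worth stating correctly.)

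Second, even for the good (separating) triangles you still need every one of them to have minimal length \emph{exactly} equal to the systole — not merely length in $[3-O(\eps),3]$. With a ribbon metric this is not automatic: different triangles pass through different vertex neighbourhoods and their shortest representatives need not all have the same length, so only those attaining the minimum would be systoles. The paper's proof spends most of its effort on this point: it proves the $[3-6\eps,3]$ bound, and then \emph{grafts} Euclidean cylinders along the currently-longest selected geodesics, iterating until all the selected classes attain a common minimal length, while verifying that grafting never decreases any other class below that value. Your proposal omits this equalization step entirely. To close the gap you would either have to prove that in your collapsed-hub CAT$(0)$ model all separating triangle paths are locally geodesic of length precisely $3$ (a plausible but non-trivial angle check at each vertex, which depends on the chosen rotation system), or carry out a grafting/normalization argument along the lines of the paper.

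If you repair these two points, the doubling route is a legitimate variant: you give up roughly a factor of $6$ in the genus ($g\sim n^2/2$ instead of $g\sim n^2/12$ from Ringel--Youngs plus capping), but you gain a transparent $H_1$-argument for distinctness and avoid having to appeal to minimality of the embedding genus to kill the boundary circles.
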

\begin{remark}
The statement is stated in terms of order of growth for simplicity. The result we shall in fact prove is stronger, namely that there exist surfaces with at least
$$
g\sqrt{48 g - 47} +15 g +\frac{1}{3} \sqrt{48 g - 47} - \frac{41}{3} > 6 g^{\sfrac{3}{2}}
$$
number of systoles, but it is the order of growth we are really interested in.
\end{remark}

\begin{proof}
The proof is by construction. The general idea of the proof is as follows: we'll begin with a complete graph $\Gamma_n$ (with $n>3$ for this construction to work) and imitate its geometry on a surface into which it is embedded. To give a rough idea of where the numbers are coming from, recall that one can embed a complete $n$-graph into a surface of genus $g\sim n^2$. On such a graph, the any nontrivial loop has length at least $3$, so the short loops are those of length exactly $3$ and there are $\left(
\begin{array}{c}
n  \\
3  
\end{array}
\right)
 \sim n^3 \sim g^{\sfrac{3}{2}}$ of those. The idea is then to construct an (almost) isometric embedding so that ``many" of the nontrivial short cycles of the graph remain nontrivial on the surface, and so that there aren't any others of smaller length. Then the surface will require some tweaking so that the lengths of the short nontrivial curves are all exactly the same.

Our first observation is topological: consider a minimal genus surface $S_{g_n}$ of genus $g_n$ into which $\Gamma_n$ is embedded. By Ringel's and Youngs' Theorem \cite{riyo68} we have
$$
g_n < \frac{(n-3)(n-4)}{12} +1.
$$
This embedding will serve as our blueprint for the construction of the surface. In fact, we will begin building the geometry of our goal surface in a neighborhood of the embedded graph before describing the metric on the full surface.

Geometrically we will be thinking of our graph as a metric graph with all edges of length $1$, and obtained by pasting $n$-pods together (each $n$-pod consists of a vertex with $n$ half-edges of length $\sfrac{1}{2}$). We begin by embedding each $n$-pod in the euclidean plane as follows. The vertex is on the origin and the half-edges are euclidean segments of length $\sfrac{1}{2}$ equally distributed around the origin.

\begin{figure}[h]
\leavevmode \SetLabels
\L(.23*.4) $\;$\\
\endSetLabels
\begin{center}
\AffixLabels{\centerline{\epsfig{file =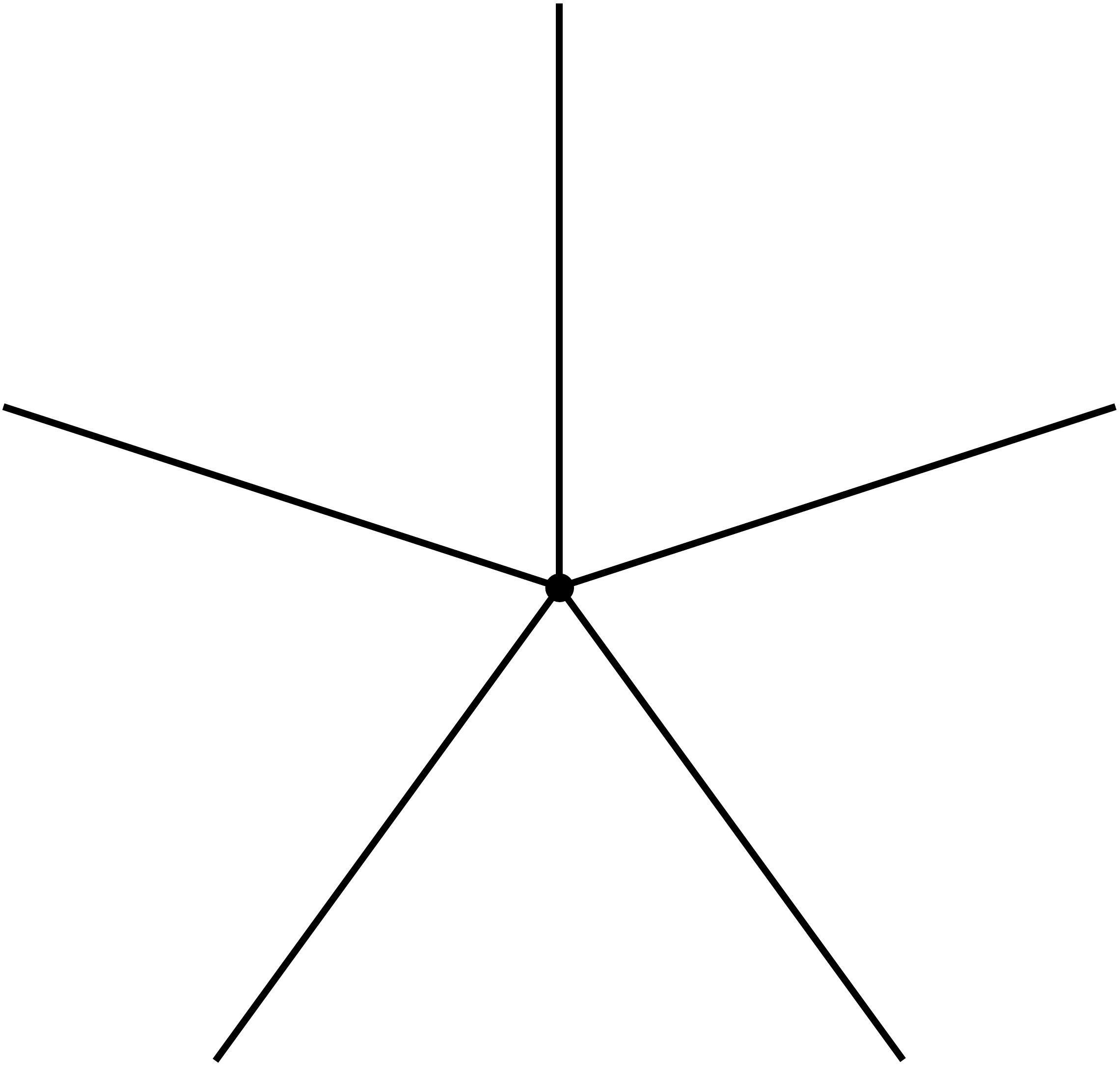,width=4.0cm,angle=0}\hspace{1.5cm} \epsfig{file =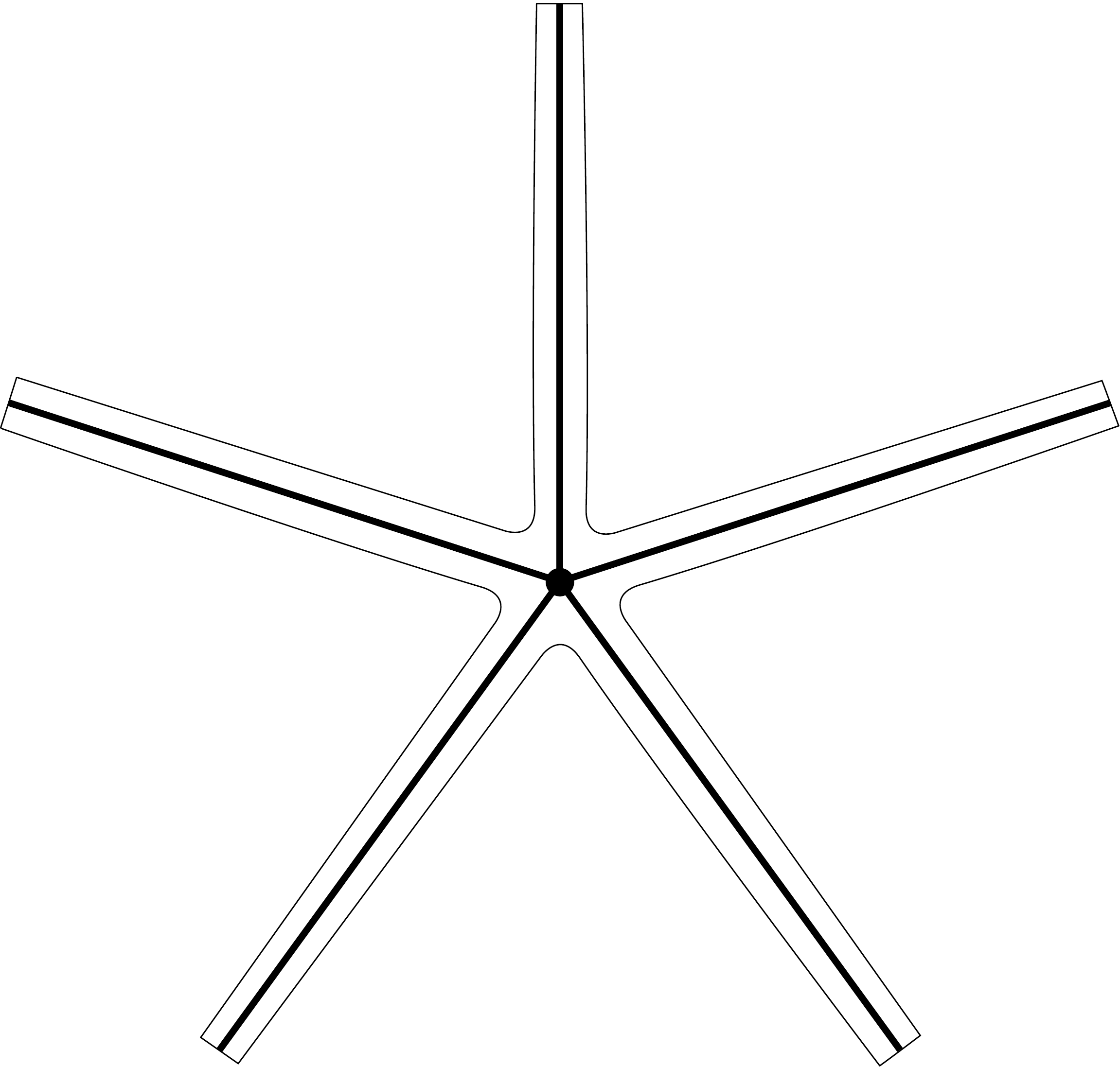,width=4.0cm,angle=0}}}
\vspace{-18pt}
\end{center}
\caption{Constructing a ribbon $n$-pod} \label{fig:eucnpods}
\end{figure}

We now fix a small $\varepsilon>0$ and consider a {\it ribbon type $n$-pod} of width $\varepsilon>0$ around our embedded $n$-pod. The ends of the ribbons are flat as is illustrated in figure \ref{fig:eucnpods}. Formally: the ribbon type $n$-pod is the intersection of the closed $\varepsilon>0$ neighborhood of the ribbon and a collection of closed half-planes defined as follows. For each euclidean ray supporting a half-edge, we consider the lower half space delimited by the unique line perpendicular to the ray and distance $\frac{1}{2}$ from the origin. The flat ends of each individual ribbon are euclidean segments of length $2 \varepsilon$. Two half ribbons can be glued geometrically in the natural way to (locally) obtain a picture of a surface with boundary (see figure \ref{fig:ribbons}).

\begin{figure}[h]
\leavevmode \SetLabels
\L(.23*.4) $\;$\\
\endSetLabels
\begin{center}
\AffixLabels{\centerline{\epsfig{file =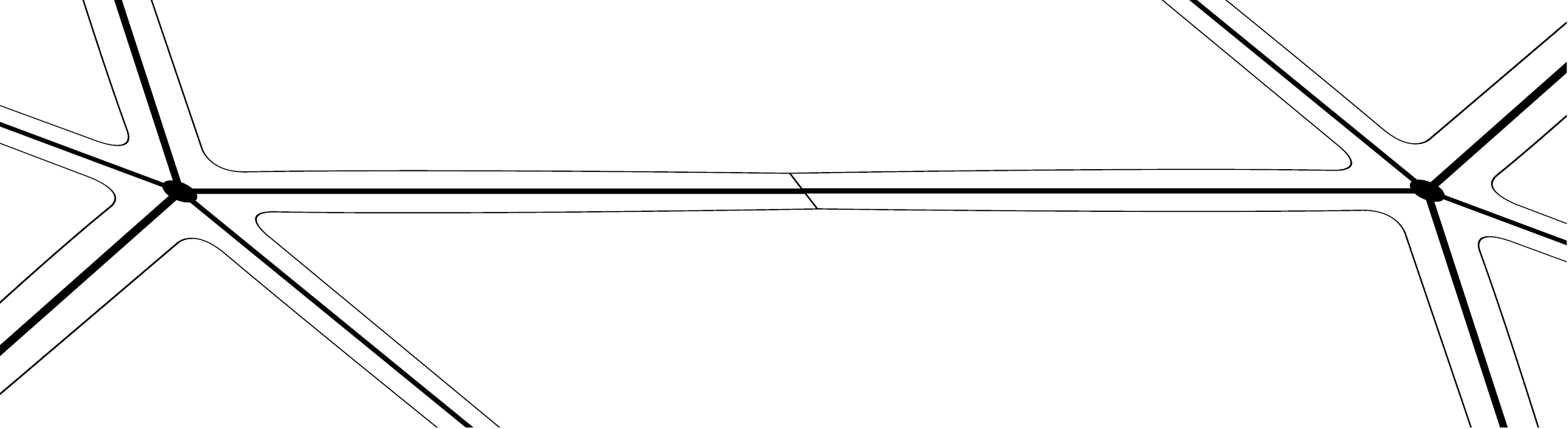,height=3.0cm,angle=0}\hspace{.5cm} \epsfig{file =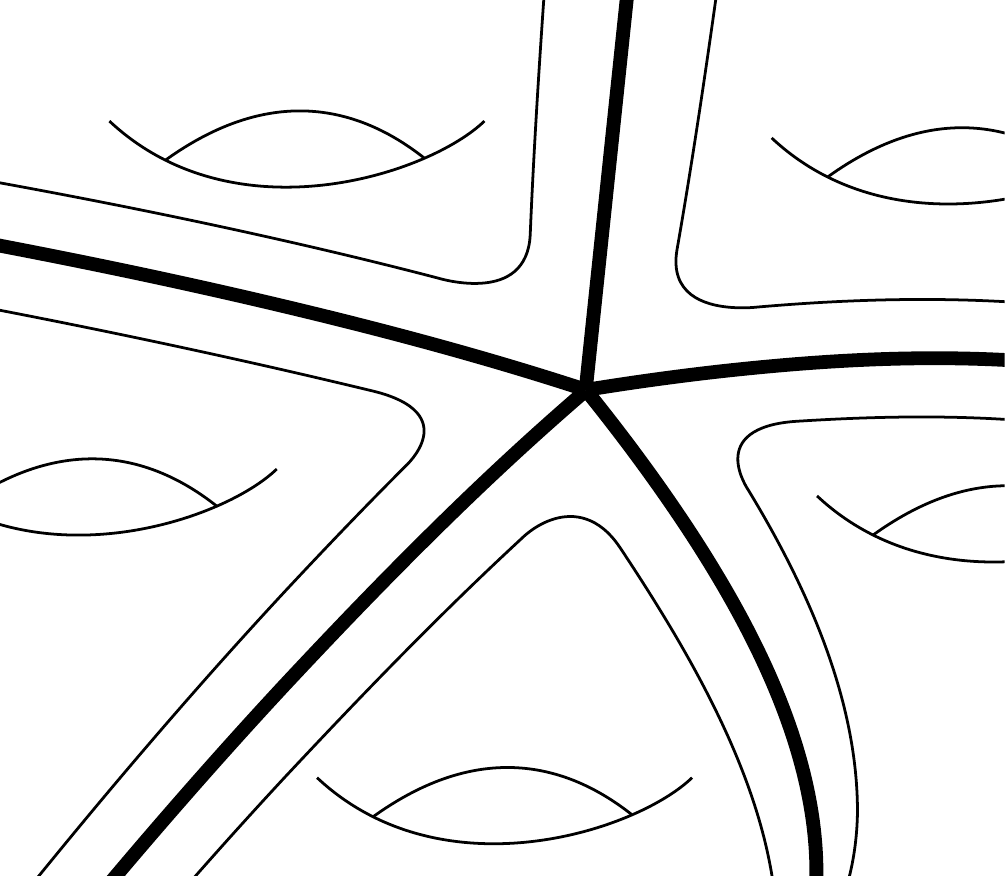,height=3.0cm,angle=0}}}
\vspace{-18pt}
\end{center}
\caption{The construction of $S_{\varepsilon.g_n}$ and its embedding in $S_{g_n}$} \label{fig:ribbons}
\end{figure}

Going back to the pairings induced by the original embedding, we perform this individual gluing on each of the pairings, and the result is a surface with boundary. Observe that the surface is homeomorphic to a closed neighborhood of the graph embedded in the surface. We denote this surface $S^\varepsilon_{g_n}$. We shall observe a few things about this surface.

First of all, thinking of $S^\varepsilon_{g_n}$ as a topological subsurface of $S_{g_n}$, observe that all the boundary curves of the embedded subsurface $S^\varepsilon_{g_n}$ are trivial (otherwise the embedding would not have minimal genus).

Via this construction, there is a natural projection (at the level of homotopy classes) from cycles in $\Gamma_n$ (and curves in $S^\varepsilon_{g_n}$) to curves in $S_{g_n}$. We are interested in minimal length cycles of $\Gamma_n$ (those of length $3$) that project to nontrivial and homotopically distinct curves on $S_{g_n}$. We now need to count them. Consider a cycle on $\Gamma_n$ and consider it on $S^\varepsilon_{g_n}$ via our original embedding. As it passes through a vertex, it enters through one half edge, exits through another and separates the ribbon type $n$-pod into $2$ pieces separating the remaining half-edges into two sets (one of which can be empty). This is illustrated in figure \ref{fig:localcurve}.
\begin{figure}[h]
\leavevmode \SetLabels
\L(.23*.4) $\;$\\
\endSetLabels
\begin{center}
\AffixLabels{\centerline{\epsfig{file =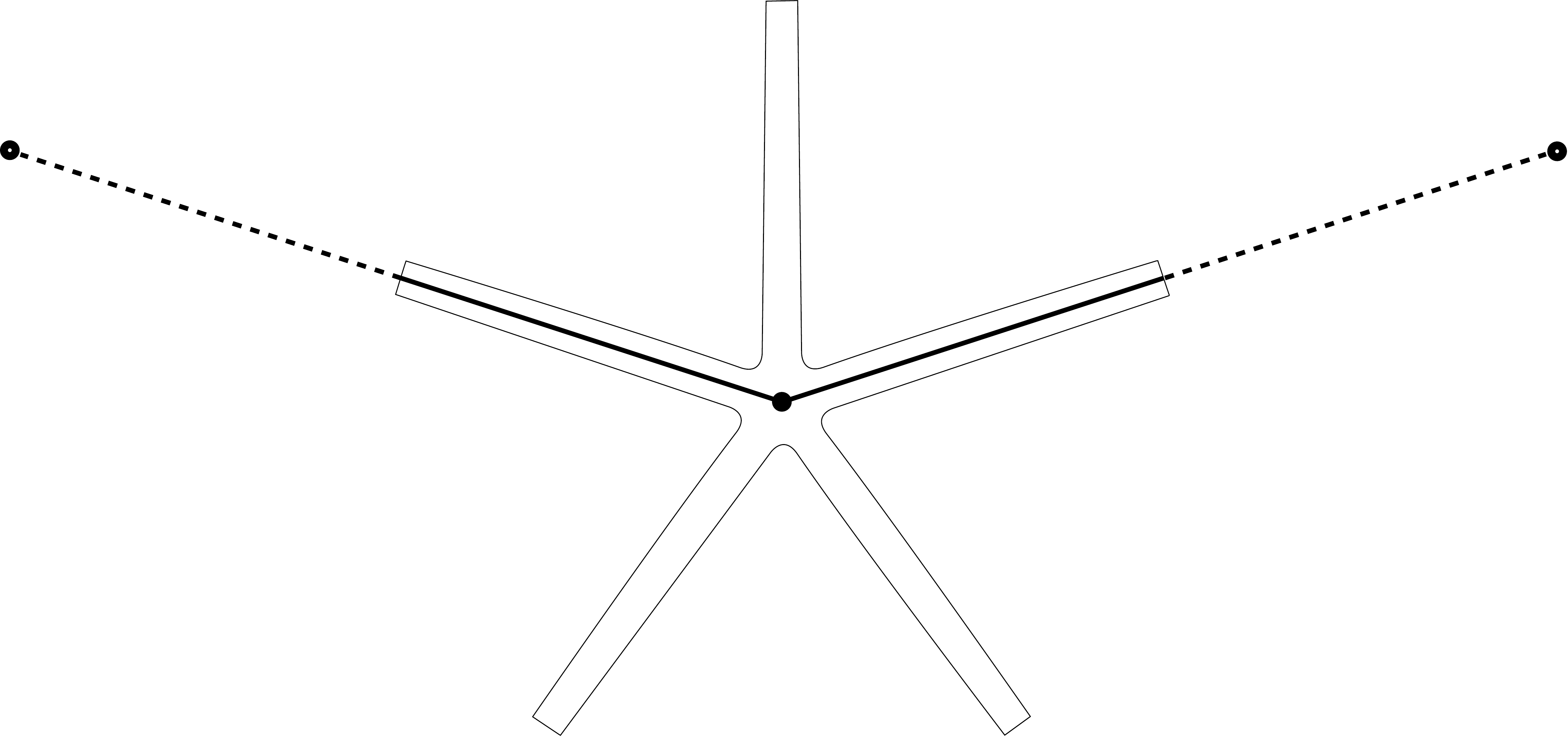,width=8.5cm,angle=0}}}
\vspace{-18pt}
\end{center}
\caption{A local picture of a short cycle of $\Gamma_n$ in $S_{g_n}$} \label{fig:localcurve}
\end{figure}

Consider the subset of all length $3$ cycles which contain a vertex on which the cycle separates the other half-edges of the associated $n$-pod into two non-empty sets. Now we claim that any such cycle projects to a {\it nontrivial} curve on $S_{g_n}$ and that any two such cycles are non-isotopic. To see this observe that any such cycle intersects another such cycle in exactly one point. Via the bigon criterion, both of these cycles are essential and in fact homologically nontrivial. Furthermore, observe that any two distinct such cycles must intersect at least one other such cycle differently, and as such are homologically, and thus non-isotopic. We can also remark that any two such cycles intersect at most once.

{\it Remark.} Although this doesn't play a role in what follows (because we are concerned with lower bounds), the cycles described above constitute the full set of short cycles of $\Gamma_n$ that project to pairwise distinct nontrivial curves on $S_{g_n}$. To see this, observe that if a short cycle fails to be in this set, then via our embedding into $S_{g_n}$, it is freely homotopic to a curve that is disjoint from the embedding of $\Gamma_n$. If it were to be nontrivial on $S_{g_n}$, then by cutting along this curve, one obtains an embedding of $\Gamma_n$ into a surface a surface of smaller genus than $S_{g_n}$, a contradiction.

\begin{figure}[h]
\leavevmode \SetLabels
\L(.23*.4) $\;$\\
\endSetLabels
\begin{center}
\AffixLabels{\centerline{\epsfig{file =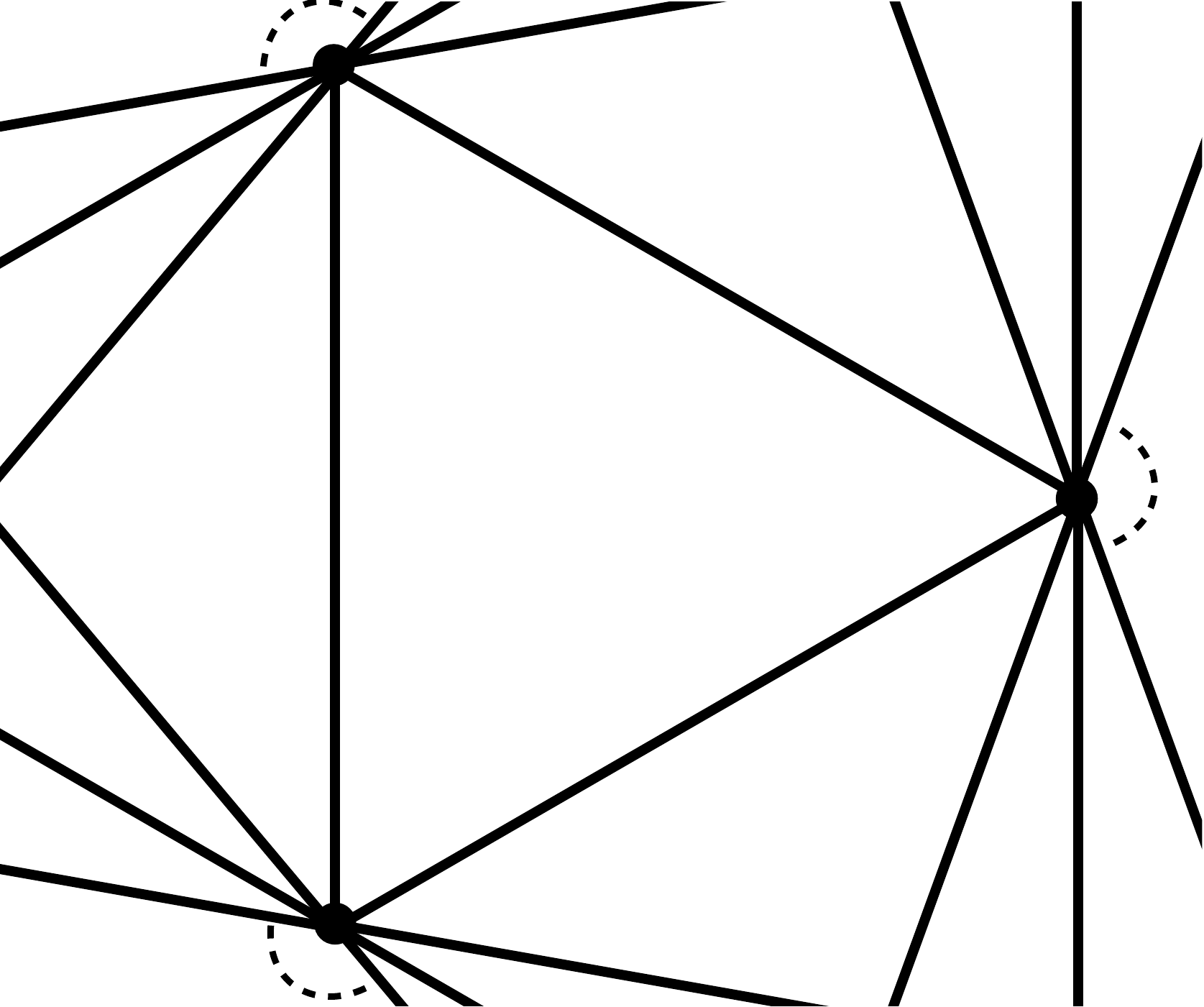,width=5.5cm,angle=0}}}
\vspace{-18pt}
\end{center}
\caption{A nontrivial cycle in $\Gamma_n$ which projects to a trivial curve in $S_{g_n}$} \label{fig:orientedpants}
\end{figure}
We can now count these cycles. Note that any two edges of a short cycle determine the cycle completely as this determines the $3$ vertices of the cycle. Around each vertex there are $\frac{n(n-3)}{2}$ choices of pairs of distinct edges joined in the vertex that separate the remaining edges of the vertex into two non-empty sets. We can make this number of choices for each vertex, and each cycle will have been counted (at most) $3$ times. The total number of cycles $N$ is thus at least
$$
N\geq  \frac{n^2(n-3)}{6}.
$$
These cycles determine a fixed set of (free) homotopy classes of curves on $S_{g_n}$ and it is the set we are to going to find a geometric structure for which they are all represented by systoles. We denote this set of free homotopy classes by $\curves$.

We now return to our surface with boundary $S^\varepsilon_{g_n}$. Each of the cycles on this surface admits a minimal length geodesic in its free homotopy/isotopy class. Often one refers to the minimal length of an isotopy class as the length of an isotopic geodesic of minimal length.

{\it Claim.} Each of the minimal length geodesics has length $\in [3-6\varepsilon, 3]$. 

{\it Proof of claim.}
The upper bound is obvious as each free homotopy class is represented by a cycle of length $3$, passing through $3$ vertices $v_1,v_2,v_3$. For the lower bound, observe that the distance between two of the vertices on $S^\varepsilon_{g_n}$ is exactly $1$. Now consider points $p_1,p_2,p_3$ along a minimal geodesic $\gamma$ that are of minimal distance to $v_1,v_2,v_3$. By construction, these distances are all less than $\varepsilon$ (any curve freely homotopic to the cycle must traverse each of the three ``corridors"). Consider the three subarcs of $\gamma$ between the points $p_1,p_2,p_3$, and denote their lengths $\ell_1, \ell_2, \ell_3$. Now by concatenating the paths, and using the fact that the vertices are at distance $1$ from each other, we see that 
$$
1 \leq \ell_k + 2 \varepsilon
$$
for $k=1,2,3$. It follows that 
$$
\ell(\gamma)= \ell_1+\ell_2+\ell_3 \geq 3 - 6\varepsilon.
$$
This proves the claim.\\

Our next objective is to modify the metric so that the curves in $\curves$ all have length exactly $3$. We begin by choosing, one for each of our isotopy classes, a minimal length geodesic. If necessary, we can paste arbitrarily thin euclidean cylinders on the boundary of the surface, to ensure that all minimal geodesics we have chosen do not touch the boundary. Observe that any {\it other} nontrivial isotopy class on the surface has minimal length at least $4$ (minus something close to $0$ when $\varepsilon$ is small). Via \cite{frhasc82}, any two minimal geodesics intersect minimally among all representatives in their respective free homotopy classes. In particular, any two distinct and freely homotopic minimal length geodesics are disjoint. It follows that there exists a $\varepsilon'>0$ (with $\varepsilon'<\varepsilon$) such that the closed $\varepsilon'>0$ neighborhood of one our chosen minimal length geodesics only (completely) contains other minimal length geodesics that are in the same free homotopy class. As such, we can consider an $\varepsilon'>0$ neighborhood of the full set of chosen minimal length geodesics for which we are sure that any two minimal length geodesics have the same ``combinatorics" if and only if they are isotopic. By {\it combinatorics} we mean that they pass through the same $\varepsilon'$ corridors (strips of total length $\sim 1$ and of width $\varepsilon'$). We consider this new surface with boundary $S^{\varepsilon'}_{g_n}$. Observe that the number of boundary components may have increased, but the genus remains the same (topologically we've just cut disjoint disks out of $S^\varepsilon_{g_n}$ to obtain $S^{\varepsilon'}_{g_n}$).

We can now proceed to the modification of the metric on $S^{\varepsilon'}_{g_n}$. Consider the set of {\it maximal length} geodesics among our set of minimal length geodesics representing $\curves$.

To illustrate the construction, let us suppose that there is only one curve of maximal length, say $\gamma$, of length $L \leq 3$. We can insert a euclidean cylinder of width $\omega$ and with both boundary lengths $\ell(\gamma)$ along $\gamma$ while respecting how the two copies of $\gamma$ were pasted together. (We will call this {\it grafting} along $\gamma$, although the term is generally used for this type of construction along geodesics of hyperbolic surfaces.) Observe that this process {\it increases} the minimal length of any isotopy class of a curve that crosses $\gamma$. To see this, suppose this was not the case, i.e., there is a curve $\delta$ on the modified surface of shorter length than a given minimal length geodesic on the original surface. Consider the subarc $c$ of $\delta$ that passes through the cylinder. Consider the projection $c'$ of $c$ to the basis of the cylinder as in figure \ref{fig:graft}. By construction $\ell(c)>\ell(c')$. Now we ``project" $\delta$ to the original surface, by concatenating the image of $c'$ and the arc of $\delta$ that was disjoint from the cylinder to obtain a curve $\tilde{\delta}$ on the original surface of strictly shorter length than $\ell(\delta)$, a contradiction.

\begin{figure}[h]
\leavevmode \SetLabels
\L(.8*.38) $\gamma$\\
\L(.683*.321) $c'$\\
\L(.281*.235) $c'$\\
\L(.34*.72) $\delta$\\
\L(.279*.421) $c$\\
\L(.743*.63) $\tilde{\delta}$\\
\endSetLabels
\begin{center}
\AffixLabels{\centerline{\epsfig{file =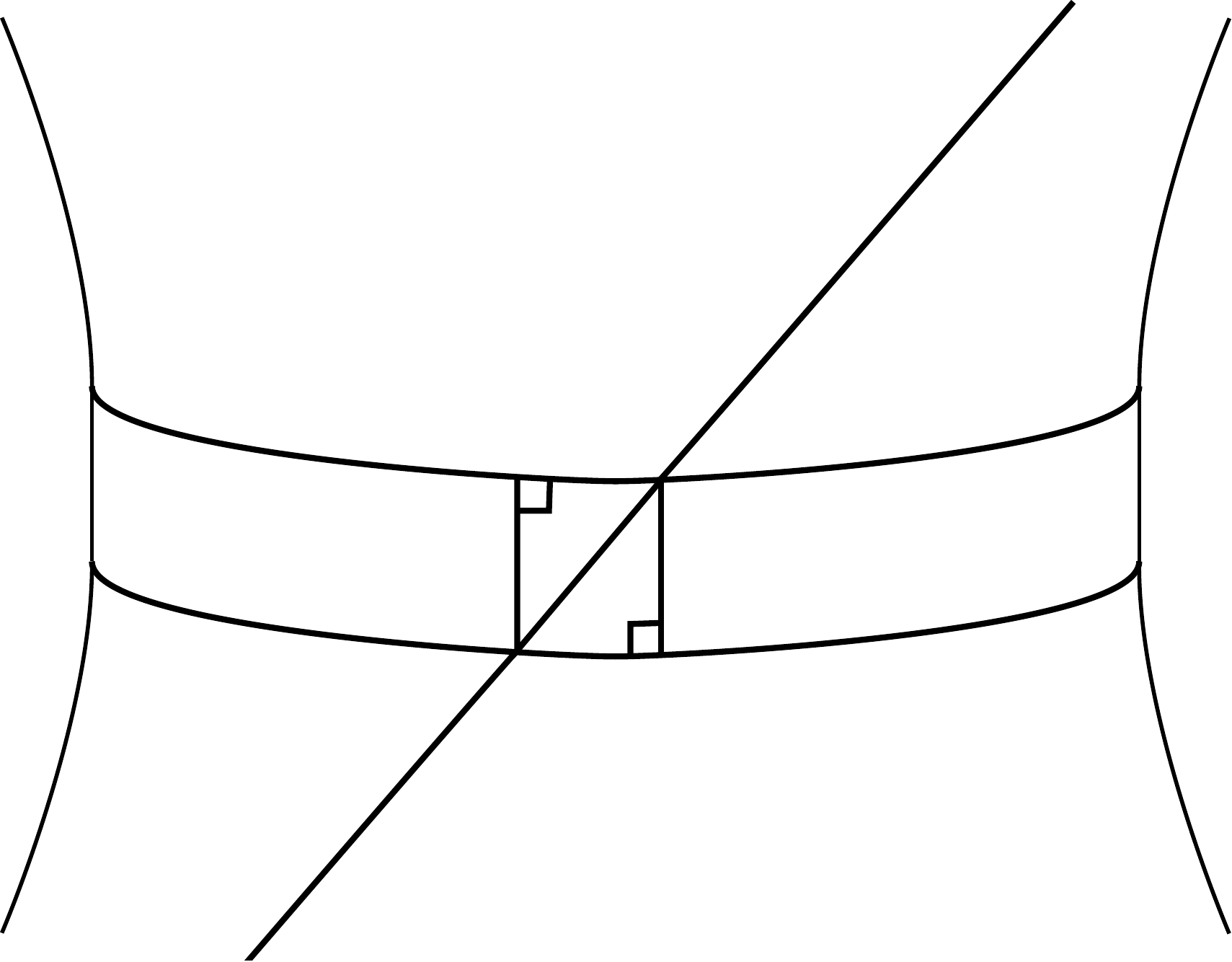,width=5.0cm,angle=0}\hspace{1.0cm} \epsfig{file =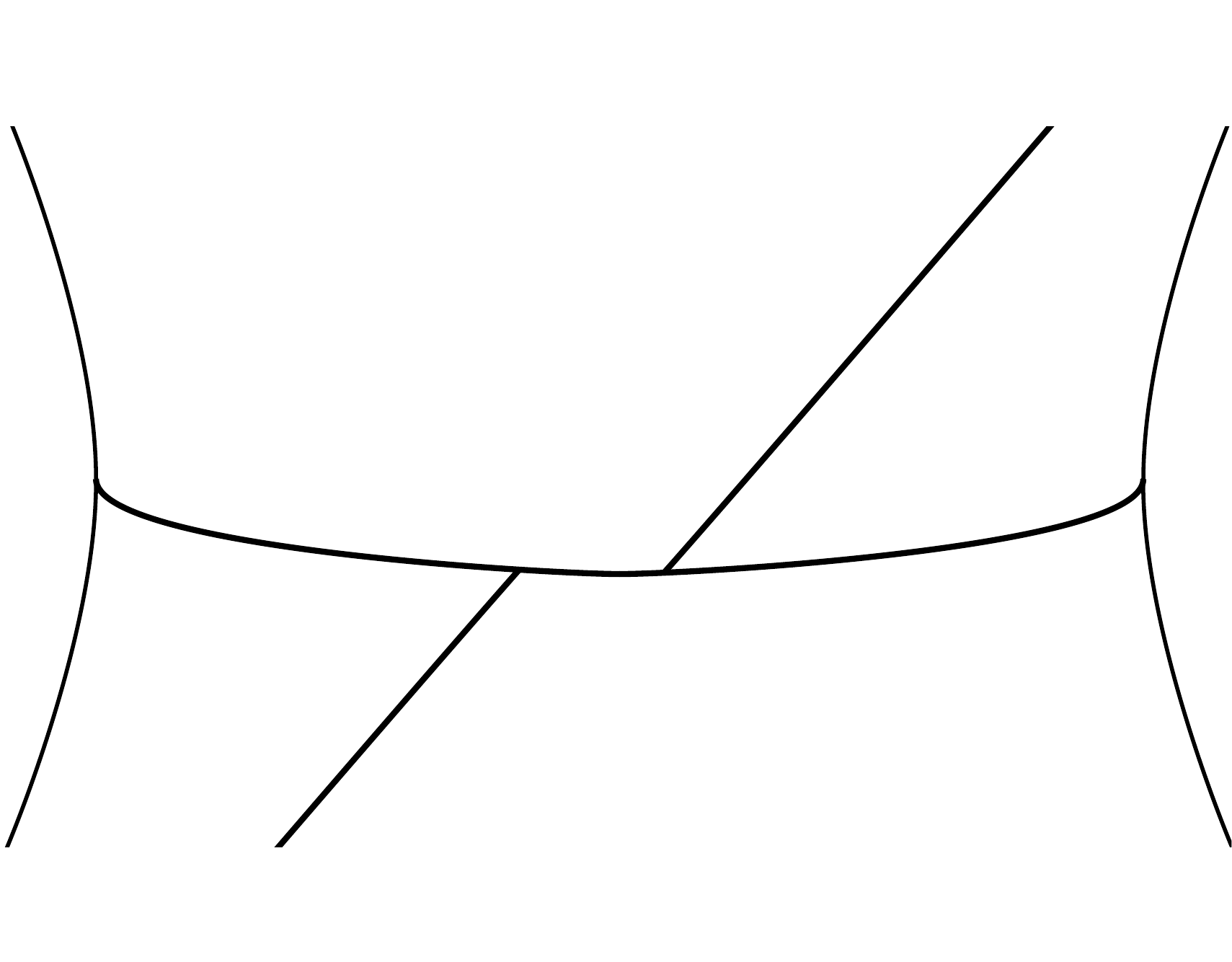,width=5.0cm,angle=0}}}
\vspace{-18pt}
\end{center}
\caption{Grafting along $\gamma$} \label{fig:graft}
\end{figure}

Also observe that via grafting along $\gamma$,  the minimal length of the isotopy class of $\gamma$ does not change. In fact, we obtain a cylinder of minimal length geodesics parallel to the two copies of $\gamma$. Also observe, that in light of the fact that minimal geodesics intersect minimally, the minimal length of {\it any} isotopy class that does not intersect $\gamma$ remains unchanged during this process. As $\ell(\gamma)\leq 3$, and any curve going through the inserted cylinder has length strictly less than the width of the cylinder, there exists a width $\omega_0< 3$ for which there is at least one other isotopy class among our selected classes with minimal length exactly $\ell(\gamma)$.

We now imitate this construction, and repeat it iteratively, but for more than one curve. Specifically, consider a set $\gamma_1,\hdots,\gamma_k$ of curves with maximal minimal length among our chosen classes on a surface with boundary $S^i$, obtained after $i$ metric modifications of $S^{\varepsilon'}_{g_n}$. We consider an $\tilde{\varepsilon}$ neighborhood around them, again as in the original construction of $S^{\varepsilon'}_{g_n}$. This gives us a sub-surface of $S^i$ with boundary curves $\delta_1,\hdots,\delta_b$. We insert a euclidean cylinder of width $\omega$ along each of the $\delta_k$. As above this process will make any curve that intersects at least of one of the $\gamma_k$  increase. And again, if there is at least one free homotopy class in our set of chosen classes that intersects of these curve, because the length of all the $\gamma_k$ is less or equal to $3$, there exists an $\omega_0$ for which a new homotopy class now has minimal length equal to $L$.

We now observe the following: between any two free homotopy classes $h$ and $\tilde{h}$ in our chosen set, there exist a sequence of $h^k\in \curves$ with $h^0=h$, $h^p=\tilde{h}$ and $\ii (h^i,h^{i+1})=1$. In particular, this guarantees that the above process leaves no curve in $\curves$ isolated and finishes in at most $N$ steps where $N$ is the cardinality of $\curves$.

We can now summarize the result of the construction: via a finite number of insertions of cylinders into a surface with closed boundary, we now have a surface $\tilde{S}$ of genus $g_n$, also with closed boundary which has $N$ distinct free homotopy classes of curves of minimal length $L$ and where all other (non-peripheral) curves have length at least $\sim 4$. We now complete the description of the metric by using a well-known trick: for each of the boundary curves $\delta \subset \partial \tilde{S}$, we glue a round hemisphere of equator length $\ell(\delta)$. On the resulting surface $S$, it is easy to see that {\it any} minimal geodesic will be completely disjoint from any of the hemispheres and will lie completely in the subsurface $\tilde{S}$.

In particular, the systoles we constructed in $\tilde{S}$ are systoles of $S$. As $n\sim \sqrt{g_n}$, we have $N\sim g_n^{\sfrac{3}{2}}$ and the resulting surface has the desired properties.
\end{proof}

\begin{remark}\label{rem:gint}
There is another construction of a surface, much simpler, that also underlies the difference between hyperbolic and non-hyperbolic surfaces. Consider a single ribbon $n$-pod (for some small $\varepsilon>0$ with $n=4m$, and pair opposite edges in the obvious way. The result is a surface of genus $g=m= \sfrac{n}{4}$ with one boundary component. As above, the curves from the original $n$-pod are of length exactly $1$, and because $n$ is even, they are smooth curves on the surfaces. These curves are in fact systoles: any curve with more complicated combinatorics has length at least $\sim 2$ and any curve that stays in a single ribbon has length at least $1$. (Each of these curves has a family of parallel isotopic systoles as well.) As above, we glue a euclidean hemisphere to the single boundary curve of the surface. The interest in this construction is that in the center vertex of the embedded $n$-pod, there are exactly $n= 4g$ non-isotopic systoles that intersect in a single point. This is again in striking difference to what is possible for hyperbolic surfaces in view of Lemma \ref{lem:angle}.
\end{remark}
\addcontentsline{toc}{section}{References}
\bibliographystyle{Hugo}
\def\cprime{$'$}

\end{document}